\newtheorem{theorem}{Theorem}[section]
\newtheorem{corollary}[theorem]{Corollary}
\newtheorem{lemma}[theorem]{Lemma}
\newtheorem{proposition}[theorem]{Proposition}
\theoremstyle{definition}
\newtheorem{definition}[theorem]{Definition}
\newtheorem{remark}[theorem]{Remark}
\newtheorem{example}[theorem]{Example}
\newcommand{\lra}{\longrightarrow}
\newcommand{\rr}{\mathbb{R}}
\newcommand{\frakA}{\mathfrak A}
\newcommand{\frakg}{\mathfrak g}
\newcommand{\frakS}{\mathfrak S}
\newcommand{\bF}{\mathbf F}
\newcommand{\Lie}{\operatorname{Lie}}
\newcommand{\Ad}{\operatorname{Ad}}
\newcommand{\calA}{\mathcal A}
\newcommand{\calL}{\mathcal L}
\newcommand{\calX}{\mathcal X}
\newcommand{\calY}{\mathcal Y}
\newcommand{\subs}{\subset}
\newcommand{\pr}{\operatorname{pr}}
\newcommand{\Lra}{\Longrightarrow}
\newcommand{\norm}[1]{\lVert#1\rVert}
\title[Reduction of forced discrete mechanical systems]{Lagrangian
  reduction of\\ forced discrete mechanical systems}
\author[M. I. Caruso, J. Fern\'andez, C. Tori and M. Zuccalli]{}
\subjclass{Primary: 37J06, 70H33; Secondary: 70G75.}
\keywords{Geometric mechanics, forced discrete mechanical systems, symmetry and reduction.}
\begin{document}
\maketitle

\centerline{\scshape
Mat\'ias I. Caruso$^{{\href{mailto:mcaruso@mate.unlp.edu.ar}{\textrm{\Letter}}}*1,2,3}$,
Javier Fern\'andez$^{4}$,
Cora Tori$^{2,5}$
and Marcela Zuccalli$^{1,2}$}

\medskip

{\footnotesize \centerline{$^1$Depto. de Matem\'atica, Facultad de
    Ciencias Exactas, Universidad Nacional de La Plata, Argentina} }

\medskip

{\footnotesize \centerline{$^2$Centro de Matem\'atica de La Plata
    (CMaLP), Argentina} }

\medskip

{\footnotesize \centerline{$^3$Consejo Nacional de Investigaciones
    Cient\'ificas y T\'ecnicas (CONICET), Argentina} }

\medskip

{\footnotesize
\centerline{$^4$Instituto Balseiro, Universidad Nacional de Cuyo -- C.N.E.A., Argentina}
}

\medskip

{\footnotesize \centerline{$^5$Depto. de Ciencias B\'asicas, Facultad
    de Ingenier\'ia, Universidad Nacional de La Plata, Argentina} }

\bigskip

\begin{abstract}
  In this paper we propose a process of Lagrangian reduction and
  reconstruction for symmetric discrete-time mechanical systems acted
  on by external forces, where the symmetry group action on the
  configuration manifold turns it into a principal bundle. We analyze
  the evolution of momentum maps and Poisson structures under
  different conditions.
\end{abstract}


\section{Introduction}

Systems that are acted on by external forces are most common in the
modeling of real world physical systems and, in particular, mechanical
ones. The forcing may originate in actuators, friction, dissipation,
etc. As in the case of Lagrangian systems, the need for efficient
integration of the corresponding equations of motion has led to the
study of forced discrete mechanical systems. These are discrete-time
dynamical systems whose trajectories are determined by a variational
principle that is the discrete-time analogue of the one that
determines the (continuous-time) trajectories of forced mechanical
systems. Those discrete-time trajectories can be used as numerical
integrators of the (continuous) system; in the free, that is,
``unforced'' case, these are called \emph{variational integrators} and
are known to enjoy some very good conservation properties. Forced
discrete mechanical systems have been known and studied for a number
of years (see Part Three of~\cite{M-West}). Recently, the interest in
these systems has been growing (see, for
instance,~\cite{MdD-MdA-18,MdD-MdA-20,F-G-G,dL-L-LG-22}).

Naturally, when a dynamical system has some symmetries, it is useful
to look for a ``simpler'' system whose dynamics captures, at least,
part of the dynamics of the original system and may be used to
reconstruct the original one. We call such a system a \emph{reduced
  system} (references with and without constraints in the continuous
setting include \cite{C-M-R,C-M-R-01b}, while in the discrete one
include \cite{F-T-Z-16,F-T-Z}). The goal of this paper is to show
that, under adequate conditions, such systems exist for forced
discrete mechanical systems and study some of their properties. In
particular, we recover the dynamics of the original system out of that
of the reduced one. Another point that we explore is the existence of
some conserved structures in the reduced system: for example, we prove
that, if the flow of the original system preserves a Poisson structure
and the symmetry group acts by Poisson maps, then the flow of the
reduced system also preserves an induced Poisson structure. The
approach to the problem runs parallel to the one used in~\cite{F-T-Z}
for nonholonomic discrete mechanical systems, but now considering no
constraints and, instead, with the external forcing.

Other work in the study of symmetries of forced discrete mechanical
system includes \cite{M-West} and \cite{dL-L-LG-22}, where
Noether-type theorems are studied, but they do not go as far as to
present a notion of reduced system.

The plan for the paper is as follows: in Section 2 the basic notions
of forced discrete mechanical systems are reviewed. In Section 3 we
introduce the notion of symmetry group for those discrete systems and
prove the main reduction and reconstruction results. Last, in Section
4 we recall a well known Noether Theorem for forced discrete
mechanical systems and explore the evolution of momentum maps under other
conditions outside of the hypotheses of that theorem; we also study
the existence of Poisson structures conserved by the flow of the
reduced system.


\section{Forced discrete Lagrangian systems}

We dedicate this section to reviewing the definitions concerning
forced discrete mechanical systems and their dynamics.


\subsection{Preliminaries on product manifolds}
\label{subsec-preliminaries}

Given an $n$-dimensional manifold, we consider the product manifold
$Q \times Q$ and $\pr_1 : Q \times Q \lra Q$ and
$\pr_2 : Q \times Q \lra Q$ the canonical projections on the first and
second factor, respectively.  Using the product structure of
$Q \times Q$, we have that
$$T(Q \times Q) \simeq \pr_{1}^{\ast} (TQ) \oplus \pr_{2}^{\ast}
(TQ),$$ where $\pr_{i}^{\ast} (TQ)$ denotes the pullback of the
tangent bundle $TQ \lra Q$ over $Q \times Q$ by $\pr_i$ for
$i=1,2$. If we define $j_1 : \pr_{1}^{\ast} (TQ) \lra T(Q \times Q)$
as $j_1(\delta q) := (\delta q,0)$ we have that $j_1$ is an
isomorphism of vector bundles between $\pr_{1}^{\ast} (TQ)$ and
$TQ^{-} := ker(T \pr_2) \subset T(Q \times Q)$. Similarly, defining
$j_2 : \pr_{2}^{*} (TQ) \lra T(Q \times Q)$ as
$j_2(\delta q) := (0,\delta q)$ identifies $\pr_{2}^{*} (TQ)$ with the
subbundle $TQ^{+} := ker(T \pr_1) \subset T(Q \times Q)$.

So, the decomposition $T(Q \times Q) = TQ^{-} \oplus TQ^{+}$ leads to
the decomposition
$$T^{\ast}(Q \times Q) = (TQ^{-})^{\circ} \oplus (TQ^{+})^{\circ}$$
and the natural identifications
$$(TQ^{+})^{\circ} \simeq (TQ^{-})^{*} \simeq \pr_1^{*}T^{*}Q \text{ and } (TQ^{-})^{\circ} \simeq (TQ^{+})^{*} \simeq \pr_2^{*}T^{*}Q.$$

For any smooth map $H : Q \times Q \lra X$, where $X$ is a smooth
manifold, we define $D_{1}H := TH \circ j_1$ and
$D_{2}H := TH \circ j_2$, where $TH:T(Q \times Q) \lra TX$ denotes the
tangent map of $H$, as usual. Thus,
$$TH(q_0,q_1)(\delta q_0,\delta q_1) = D_{1}H(q_0,q_1)(\delta q_0) + D_{2}H(q_0,q_1)(\delta q_1).$$

If $H : Q \times Q \lra \rr$, then $D_{1}H(q_0,q_1) \in T^{*}_{q_0} Q$
and $D_{2}H(q_0,q_1) \in T^{*}_{q_1}Q$.

Naturally, these ideas may be extended to a product of more than two
(possibly different) manifolds.


\subsection{Main ingredients}

Recall that a forced Lagrangian mechanical system consists of a triple
$(Q,L,f)$, where $Q$ is a smooth manifold, the \emph{configuration
  space}, $L : TQ \lra \rr$ is a smooth map, the \emph{Lagrangian},
and $f$ is a horizontal\footnote{A differential form on the total
  space of a fiber bundle $\phi : Q \lra M$ is called {\it horizontal}
  if it vanishes when any of its arguments is a vertical vector, i.e.,
  an element of $\ker T\phi$.} $1$-form on $TQ$, the
\emph{force}\footnote{Equivalently, it is common to define the force
  of a system as a fiber-preserving map from $TQ$ to $T^*Q$.}.

In a discrete-time analogue of this type of system, it seems
reasonable to replace the infinitesimal processes with states of the
system corresponding to close moments in time, i.e., to replace the
tangent bundle $TQ$ with the product manifold $Q \times Q$ (see, for
example, \cite{M-West} and \cite{F-G-G}):

\begin{definition}
  A {\it forced discrete mechanical system} (FDMS) consists of a
  triple $(Q,L_d,f_d)$ where $Q$ is an $n$-dimensional differential
  manifold, the {\it configuration space}, $L_d : Q \times Q \lra \rr$
  is a smooth map, the {\it discrete Lagrangian} and $f_d$ is a
  $1$-form on $Q \times Q$, the {\it discrete force}.
\end{definition}

Using the identifications stated in Section
\ref{subsec-preliminaries}, we will usually decompose forces
$f_d \in \Gamma(Q \times Q, T^{*}(Q\times Q))$ as
$f_d^{-} \oplus f_d^{+}$ with $f_d^{-} \in TQ^-$ and
$f_d^{+} \in TQ^+$, where $\Gamma(M,E)$ denotes the space of smooth
sections of a fiber bundle $E \lra M$.

Thus, $f_d^- (q_0,q_1) \in T_{q_0}^*Q$,
$f_d^+ (q_0,q_1) \in T_{q_1}^*Q$ and
$$f_d(q_0,q_1)(\delta q_0,\delta q_1) = f_d^- (q_0,q_1)(\delta q_0) + f_d^+ (q_0,q_1)(\delta q_1).$$

\begin{remark}
  Roughly speaking, a discretization of $TQ$ is a local diffeomorphism
  $\Delta : TQ \lra Q \times Q$, mapping the zero section onto the
  diagonal. There are several specializations of this basic idea, as
  the discretization maps of \cite{BL-MdD}.
	
  Given a forced mechanical system $(Q,L,f)$, it is possible to
  construct a FDMS $(Q,L_d,f_d)$ via a discretization map
  $\Delta : TQ \lra Q \times Q$ as follows: the discrete Lagrangian is
  $L_d := (\Delta^{-1})^* L = L \circ \Delta^{-1}$ and the discrete
  force is $f_d := (\Delta^{-1})^* f$, where $(\Delta^{-1})^*$ denotes
  the pullback by $\Delta^{-1}$. In practice, however, it is common to
  have a family of discretizations $\Delta_h$ depending on a parameter
  $h > 0$. In this case, the definitions of $L_d$ and $f_d$ are
  modified multiplying by $h$.
\end{remark}

\begin{example}\label{example-discretization}
  Let $(Q,L,f)$ be the forced mechanical system considered in Example
  2 of \cite{dL-L-LG-22}, where $Q = \rr^2$ with $q = (x,y)$,
  $$L(q,\dot{q}) = \frac{1}{2} \norm{\dot{q}}^2 - \norm{q}^2 \left( \norm{q}^2 - 1 \right)^2$$
  and
  $$f(x,y,\dot{x},\dot{y}) = - k (\dot{x} dx + \dot{y} dy).$$
	
  Considering the midpoint discretization, defined for a parameter $h > 0$ as
  $$\Delta_h(q,\dot{q}) := \left( q - \frac{h}{2} \dot{q} , q + \frac{h}{2} \dot{q} \right),$$
  with inverse
  $$\Delta_h^{-1}(q_0,q_1) := \left( \frac{q_0 + q_1}{2} , \frac{q_1 - q_0}{h} \right),$$
  we obtain the FDMS $(\rr^2,L_d,f_d)$ given by
  \[
    \begin{split}
      L_d&(x_0,y_0,x_1,y_1) = \\
      & \frac{h}{2} \left[ \left( \frac{x_1 - x_0}{h} \right)^2 + \left( \frac{y_1 - y_0}{h} \right)^2 \right] \\
      & - h \left[ \left( \frac{x_0 + x_1}{2} \right)^2 + \left(
          \frac{y_0 + y_1}{2} \right)^2 \right] \left( \left[ \left(
            \frac{x_0 + x_1}{2} \right)^2 + \left( \frac{y_0 + y_1}{2}
          \right)^2 \right] - 1 \right)
    \end{split}
  \]
  and
  \[
    \begin{split}
      f_d&(x_0,y_0,x_1,y_1) = \\
      & - \frac{k}{2} \left[ (x_1 - x_0) \ dx_0 + (y_1 - y_0) \ dy_0 + (x_1 - x_0) \ dx_1 + (y_1 - y_0) \ dy_1 \right].
    \end{split}
  \]
\end{example}


\subsection{Dynamics}

The trajectories of a forced Lagrangian mechanical system are
determined by a variational principle (see, for example, Section 3.1
of \cite{M-West}). Next, we review a similar notion for FDMS.

\begin{definition}
  A {\it discrete curve} in $Q$ is a map
  $q_\cdot : \{ 0,\ldots,N \} \lra Q$ and an {\it infinitesimal variation}
  over a discrete curve $q_\cdot$ consists of a map
  $\delta q_\cdot : \{ 0,\ldots,N \} \lra TQ$ such that
  $\delta q_k \in T_{q_k}Q, \ \forall\, k=0,...,N $. An infinitesimal
  variation is said to have {\it fixed endpoints} if $\delta q_0 = 0$
  and $\delta q_N = 0$.
\end{definition}

\begin{definition}
  The {\it discrete action functional} of the FDMS $(Q,L_d,f_d)$ is
  defined as
  $$\frakS_d(q_\cdot) := \sum_{k=0}^{N-1} L_d(q_k,q_{k+1}).$$
\end{definition}

The dynamics of a FDMS is given by the appropriately modified discrete
Hamilton principle known as discrete Lagrange--d'Alembert
principle. See, e.g., Section 3.2. of \cite{M-West}.

\begin{definition}
  A discrete curve $q_\cdot$ is a {\it trajectory} of the FDMS
  $(Q,L_d,f_d)$ if it satisfies
  \begin{equation}\label{forced-variational principle}
    \delta \left( \sum_{k=0}^{N-1} L_d(q_{k},q_{k+1})  \right) + \sum_{k=0}^{N-1} f_d(q_k,q_{k+1})(\delta q_k,\delta q_{k+1}) = 0,
  \end{equation}
  for all infinitesimal variations $\delta q_\cdot$ of $q_\cdot$ with fixed endpoints. 
\end{definition}

The following well known result (see \cite{M-West}), which follows
from the standard calculus of variations, characterizes the
trajectories of the system in terms of solutions of a set of algebraic
equations.

\begin{theorem}
  Let $(Q,L_d,f_d)$ be a FDMS. Then, a discrete curve
  $q_\cdot : \{ 0,\ldots,N \} \lra Q$ is a trajectory of $(Q,L_d,f_d)$ if
  and only if it satisfies the following algebraic identities
\begin{equation}\label{forcedELe}
  D_2 L_d(q_{k-1},q_k) + D_1 L_d(q_k,q_{k+1}) + f_d^+(q_{k-1},q_k) + f_d^-(q_k,q_{k+1}) = 0 \in T_{q_k}^{*}Q
\end{equation}
for all $k = 1,\ldots,N-1$, called the {\it forced discrete
  Euler-Lagrange equations}.
\end{theorem}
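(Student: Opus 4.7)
The plan is the standard discrete calculus-of-variations argument: expand the left-hand side of \eqref{forced-variational principle} using the derivations $D_1$ and $D_2$ defined in Section~\ref{subsec-preliminaries}, reindex so that each summand is a covector paired with a single $\delta q_k$, then use the freedom of the interior variations.

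First, for any infinitesimal variation $\delta q_\cdot$ of $q_\cdot$, apply the chain rule to each term of the discrete action to obtain
\[
\delta L_d(q_k,q_{k+1}) = D_1 L_d(q_k,q_{k+1})(\delta q_k) + D_2 L_d(q_k,q_{k+1})(\delta q_{k+1}),
\]
and decompose the force term as $f_d=f_d^-\oplus f_d^+$, so that
\[
f_d(q_k,q_{k+1})(\delta q_k,\delta q_{k+1}) = f_d^-(q_k,q_{k+1})(\delta q_k) + f_d^+(q_k,q_{k+1})(\delta q_{k+1}).
\]
Substituting into \eqref{forced-variational principle} yields a sum of four pieces indexed over $k=0,\dots,N-1$.

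Next, I would reindex the two summands that are paired with $\delta q_{k+1}$ (namely those involving $D_2 L_d$ and $f_d^+$) by shifting $k\mapsto k-1$, so all four contributions become paired with $\delta q_k$. The boundary pieces that appear, $D_1 L_d(q_0,q_1)(\delta q_0) + f_d^-(q_0,q_1)(\delta q_0)$ at $k=0$ and $D_2 L_d(q_{N-1},q_N)(\delta q_N) + f_d^+(q_{N-1},q_N)(\delta q_N)$ at $k=N$, vanish by the fixed-endpoint condition $\delta q_0 = \delta q_N = 0$. What remains is
\[
\sum_{k=1}^{N-1}\bigl[ D_2 L_d(q_{k-1},q_k) + D_1 L_d(q_k,q_{k+1}) + f_d^+(q_{k-1},q_k) + f_d^-(q_k,q_{k+1})\bigr](\delta q_k) = 0.
\]

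For the forward direction, since each interior $\delta q_k$ can be chosen independently and arbitrarily in $T_{q_k}Q$ (take a variation supported at a single index), each bracketed covector in $T_{q_k}^*Q$ must vanish; this is exactly \eqref{forcedELe}. The converse is immediate: if \eqref{forcedELe} holds for every $k=1,\dots,N-1$, the reindexed sum above is identically zero, and reversing the reindexing recovers \eqref{forced-variational principle} for \emph{any} variation with fixed endpoints. The only delicate point is the bookkeeping of the index shift together with keeping track of which covector lives in which cotangent space, but no real obstacle arises because the identifications from Section~\ref{subsec-preliminaries} make the decomposition $f_d=f_d^-\oplus f_d^+$ canonical.
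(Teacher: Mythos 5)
Your proof is correct and is precisely the standard discrete calculus-of-variations argument (expand via $D_1$, $D_2$ and $f_d^-\oplus f_d^+$, reindex, discard boundary terms, use arbitrariness of the interior $\delta q_k$) that the paper itself invokes: the paper does not write out a proof but refers to this as a well-known result following from the standard calculus of variations. The one point worth noting explicitly is that in the discrete setting an infinitesimal variation is by definition an arbitrary assignment of $\delta q_k\in T_{q_k}Q$ at each index, so the ``variation supported at a single index'' you use is automatically admissible and the localization step is immediate.
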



\section{Reduction of forced discrete Lagrangian systems}

In this section we turn our attention to forced discrete Lagrangian
systems with symmetries. We define the notion of symmetry group of a
FDMS and prove that symmetric systems can be reduced. Finally, we
prove that the dynamics of the original system may be reconstructed in
terms of that of the reduced one.


\subsection{Principal and discrete connections}

We begin this section recalling that given a left action $l^M$ of a
Lie group $G$ on a manifold $M$, the {\it infinitesimal generator} of
the action corresponding to an element $\xi \in \frakg := \Lie(G)$ is
the vector field on $M$ given by
\begin{equation}\label{inf-generator}
  \xi_M(x) := \frac{d}{dt} \bigg|_{t=0} l_{\exp(t \xi)}^M(x).
\end{equation}

In the continuous setting, where the dynamics of the system takes
place on the tangent bundle, if there is a symmetry group for the
system, the following notion may be used to define a reduction
procedure, as shown in \cite{C-M-R}.

\begin{definition}
  A {\it principal connection} on a principal $G$-bundle
  $\pi : M \lra M/G$ is a differential $1$-form
  $\frakA : TM \lra \frakg$ such that
  \begin{enumerate}
  \item $\frakA(\xi_M(x)) = \xi$ for all $\xi \in \frakg$ and $x \in M$,
  \item $\frakA(Tl_g^M (v)) = \Ad_g(\frakA(v))$ for all $v \in TM$,
    where $\Ad$ is the adjoint action of $G$ on $\frakg$.
  \end{enumerate}
\end{definition}

In \cite{C-M-R}, Section 2.4., these objects are used for constructing
a model for $TM/G$ that is easier to handle. Given a principal
connection $\frakA$ on a principal $G$-bundle $\pi : M \lra M/G$,
there is an isomorphism of vector bundles over $M/G$,
$\alpha_\frakA : TM/G \lra T(M/G) \oplus \tilde{\frakg}$ given by
$$\alpha_\frakA([v_x]) := T\pi (v_x) \oplus [x,\frakA(v_x)],$$
where $\tilde{\frakg} := (M \times \frakg)/G$ is the {\it adjoint
  bundle}, with $G$ acting on $M$ by the action defining the principal
$G$-bundle and on $\frakg$ by the adjoint action. $\tilde{\frakg}$ is
a vector bundle over $M/G$ with fiber $\frakg$ and projection map
induced by $\pr_1 : M \times \frakg \lra M$.

Its inverse
$\alpha_\frakA^{-1} : T(M/G) \oplus \tilde{\frakg} \lra TM/G$ is given
by
$$\alpha_\frakA^{-1}(u \oplus [x,\xi]) := [h^x(u) + \xi_M(x)],$$
where $h^x(u)$ is the horizontal lift associated to $\frakA$ of $u$ at
$x \in M$.

The dual map induces an isomorphism of vector bundles
$(\alpha_\frakA^{-1})^* : T^*M/G \lra T^*(M/G) \oplus
\tilde{\frakg}^*$ by
$$(\alpha_\frakA^{-1})^*([\alpha_x])(u \oplus [x,\xi]) = [\alpha_x](\alpha_\frakA^{-1}(u \oplus [x,\xi])) = [\alpha_x]([h^x(u) + \xi_M(x)]),$$
for $u \oplus [x,\xi] \in T(M/G) \oplus \tilde{\frakg}$.

Consider a principal $G$-bundle $\pi : Q \lra Q/G$, with $l^Q$ the
action of $G$ on $Q$ defining it and let $l^{Q \times Q}$ be the
diagonal action of $G$ on $Q \times Q$. The analogous idea in the
discrete setting, where we replace $TQ$ with $Q \times Q$, would be to
describe the quotient $(Q \times Q)/G$, using some other tool, since
$Q \times Q$ is not a tangent bundle.

In \cite{F-T-Z}, inspired by the ideas found in \cite{L-M-W}, the
authors construct a model space for $(Q\times Q) /G$ associated to a
geometric object on $\pi:Q\rightarrow Q/G$ called affine discrete
connection. Briefly, a discrete connection on $\pi:Q\rightarrow Q/G$
consists in choosing a submanifold of $Q \times Q$ with certain
charactristics. Alternatively, this object can be described as a
function on $Q\times Q$ with values in $G$ that satisfies certain
properties.  Actually, a discrete connection is defined on an open
$\mathcal{U} \subset Q \times Q$ called domain of the discrete
connection but for simplicity in this paper we will consider that all
discrete connections are defined on $Q\times Q$.

\begin{definition}
  Let $\gamma : Q \lra G$ be a smooth $G$-equivariant map with respect
  to $l^Q$ and $l^G$, where $l^G$ is the action of $G$ on itself by
  conjugation. An {\it affine discrete connection} $\calA_d$ with {\it
    level} $\gamma$ is a smooth map $\calA_d : Q \times Q \lra G$
  satisfying
  \begin{enumerate}
  \item For all $q_0,q_1 \in Q$, $g_0,g_1 \in G$,
    $$\calA_d(l^Q_{g_0}(q_0) , l^Q_{g_1}(q_1)) = g_1 \calA_d(q_0,q_1) g_0^{-1}.$$
  \item $\calA_d(q,l^Q_{\gamma(q)}(q)) = e$, where $e$ is the identity
    of $G$.
  \end{enumerate}
\end{definition}

Following the ideas found in Section 4.2. of \cite{F-T-Z}, just as a
principal connection allows us to identify the quotient $(TM)/G$ with
a different model, a discrete connection $\calA_d$ provides an
isomorphism of fiber bundles over $Q/G$,
$\Phi_{\calA_d} : (Q \times Q)/G \lra \tilde{G} \times Q/G$, where
$\tilde{G} := (Q \times G)/G$ is the {\it conjugate bundle} (in which
$G$ acts on $Q$ and $G$ with $l^Q$ and $l^G$, respectively) and
$\Phi_{\calA_d}$ is defined dropping to the quotient the
$G$-equivariant map
$\tilde{\Phi}_{\calA_d} : Q \times Q \lra Q \times G \times (Q/G)$
given by
$$\tilde{\Phi}_{\calA_d}(q_0,q_1) := (q_0,\calA_d(q_0,q_1),\pi(q_1)).$$

Its inverse
$\tilde{\Psi}_{\calA_d} : Q \times G \times (Q/G) \lra Q \times Q$,
which we will use in the following pages, is of the form
$$\tilde{\Psi}_{\calA_d}(q_0,w_0,\tau_1) = (q_0,\tilde{F}_1(q_0,w_0,\tau_1)),$$
for a function $\tilde{F}_1 : Q \times G \times (Q/G) \lra G$ whose
explicit definition may be found in Section 4 of \cite{F-T-Z}.

If we call $\rho : Q \times G \lra \tilde{G}$ and
$\tilde{\pi} : Q \times Q \lra (Q \times Q)/G$ the quotient maps and
define $\Upsilon : Q \times Q \lra \tilde{G} \times Q/G$ by
$\Phi_{\calA_d} \circ \tilde{\pi}$, we have the following commutative
diagram:
\begin{equation}\label{red-spaces}
  \xymatrixcolsep{5pc}\xymatrixrowsep{4pc}\xymatrix{
    Q \times Q \ar[d]_{\tilde{\pi}}
    \ar[r]^{\tilde{\Phi}_{\calA_d}} \ar[dr]_{\Upsilon} & Q \times G \times (Q/G) \ar[d]^{\rho \times 1_{Q/G}} \\
    (Q\times Q)/G \ar[r]_{\Phi_{\calA_d}} & \tilde{G} \times Q/G
  }
\end{equation}

We summarize the two isomorphisms we will use below in a single statement.

\begin{theorem}
  Let $\calA_d$ be an affine discrete connection on $\pi : Q \lra Q/G$
  and $\tilde{\frakA}$ be a principal connection on
  $\tilde{\pi} : Q \times Q \lra (Q \times Q)/G$. There exist
  isomorphisms of bundles in the corresponding categories
$$T(Q \times Q)/G \simeq T \left( (Q \times Q)/G \right) \oplus \tilde{\frakg}, \quad (Q \times Q)/G \simeq \tilde{G} \times Q/G.$$
\end{theorem}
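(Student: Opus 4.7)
The plan is to establish the two isomorphisms separately; the first is essentially folklore once the principal connection is given, while the second is the content specific to the discrete setting. For the first isomorphism, I would apply the Cendra--Marsden--Ratiu construction recalled at the beginning of the section with $M = Q \times Q$, principal $G$-bundle $\tilde{\pi}$, and connection $\tilde{\frakA}$. This supplies the map $\alpha_{\tilde{\frakA}} : T(Q \times Q)/G \lra T((Q \times Q)/G) \oplus \tilde{\frakg}$ verbatim, together with its horizontal-lift inverse; nothing extra has to be proved for this half.

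For the second isomorphism my strategy is to verify that $\tilde{\Phi}_{\calA_d}(q_0,q_1) = (q_0, \calA_d(q_0,q_1), \pi(q_1))$ is $G$-equivariant and therefore descends to the required map $\Phi_{\calA_d}$ in diagram~\eqref{red-spaces}. I would equip $Q \times G \times (Q/G)$ with the action $g \cdot (q_0, w_0, \tau_1) = (l^Q_g(q_0),\, g w_0 g^{-1},\, \tau_1)$ and check equivariance componentwise: the first component is tautological, the third uses $\pi \circ l^Q_g = \pi$, and the middle component is exactly property~(1) of an affine discrete connection specialized to $g_0 = g_1 = g$. Descent to the quotient then produces a well-defined $\Phi_{\calA_d}$, smooth because $\tilde{\Phi}_{\calA_d}$ is smooth and $\rho \times 1_{Q/G}$ is a surjective submersion. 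Compatibility with the projections to $Q/G$ (via $\pi \circ \pr_1$ on the source and the third factor on the target) is immediate, so $\Phi_{\calA_d}$ is a morphism of fiber bundles over $Q/G$.

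To promote $\Phi_{\calA_d}$ to an isomorphism, I would invoke the inverse $\tilde{\Psi}_{\calA_d}(q_0, w_0, \tau_1) = (q_0, \tilde{F}_1(q_0, w_0, \tau_1))$ recalled just before the statement and constructed explicitly in Section~4 of~\cite{F-T-Z}, and descend the identities $\tilde{\Psi}_{\calA_d} \circ \tilde{\Phi}_{\calA_d} = \mathrm{id}$ and $\tilde{\Phi}_{\calA_d} \circ \tilde{\Psi}_{\calA_d} = \mathrm{id}$ to the quotients. The step I expect to be the main obstacle is neither the equivariance nor the descent, both of which are essentially formal once the actions are pinned down, but the underlying construction of $\tilde{F}_1$: given $(q_0, w_0, \tau_1)$ one must produce a unique $q_1 \in \pi^{-1}(\tau_1)$ with $\calA_d(q_0, q_1) = w_0$, and the smoothness of this assignment carries the real geometric content. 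Since this is already available in~\cite{F-T-Z}, I would cite it rather than reprove it, after which the remaining argument reduces to bookkeeping on the commutative diagram.
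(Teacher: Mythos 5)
Your proposal is correct and follows essentially the same route as the paper, which states this theorem as a summary of the two constructions it has just recalled: the Cendra--Marsden--Ratiu isomorphism $\alpha_{\tilde{\frakA}}$ applied to the principal bundle $\tilde{\pi} : Q \times Q \lra (Q\times Q)/G$, and the map $\Phi_{\calA_d}$ obtained by descending the $G$-equivariant map $\tilde{\Phi}_{\calA_d}$, with the inverse $\tilde{\Psi}_{\calA_d}$ imported from Section~4 of~\cite{F-T-Z}. The paper offers no further proof beyond these citations, so your equivariance check (using property~(1) of the affine discrete connection with $g_0 = g_1 = g$) and the descent argument simply make explicit what the paper leaves implicit.
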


\begin{remark}\label{Upsilon-bundle}
  Since $\tilde{\pi}$ is a principal $G$-bundle and $\Phi_{\calA_d}$
  is a diffeomorphism, it is clear that
  $\Upsilon : Q \times Q \lra \tilde{G} \times Q/G$ defines a
  principal $G$-bundle.
\end{remark}


\subsection{Forced discrete Lagrangian systems with symmetries}

\begin{definition}\label{sym-group-def}
  Let $(Q,L_d,f_d)$ be a FDMS and let $G$ be a Lie group acting on $Q$
  on the left so that the quotient map $\pi : Q \lra Q/G$ is a
  principal $G$-bundle. Considering the diagonal action on
  $Q \times Q$, we say that $G$ is a {\it symmetry group} of
  $(Q,L_d,f_d)$ if $L_d$ is $G$-invariant and
  $f_d : Q \times Q \lra T^*(Q \times Q)$ is $G$-equivariant,
  considering on $T^*(Q \times Q)$ the cotangent lift
  $l^{T^*(Q \times Q)}$ of the diagonal action\footnote{The
    $G$-equivariance of $f_d$ as a section is equivalent to its
    $G$-invariance as a $1$-form.}.
\end{definition}

In \cite{F-T-Z}, the authors introduce a reduction procedure for
symmetric nonholonomic discrete Lagrangian systems. If we consider the
unconstrained case, their work shows that a discrete mechanical system
$(Q,L_d)$ with symmetry group $G$ induces a ``reduced'' dynamical
system on $\tilde{G} \times Q/G$. Our goal here is to adapt their
formalism so that it can handle forces. Therefore, we need to study
how the forces $f_d$ drop to $\tilde{G} \times Q/G$.

The diagram \eqref{red-spaces} shows the reduced spaces and, if $G$ is
a symmetry group of the system $(Q,L_d,f_d)$, we can add the reduced
Lagrangians to the picture to obtain

\begin{equation*}
  \xymatrixcolsep{5pc}\xymatrixrowsep{4pc}\xymatrix{
    & & \rr \\ 
    Q \times Q \ar@/^/[urr]^{L_d} \ar[d]_{\tilde{\pi}}
    \ar[r]^{\tilde{\Phi}_{\calA_d}} \ar[dr]_{\Upsilon} & Q \times G \times (Q/G) \ar[ur]^(.4){\check{L}_d} \ar[d]^{\rho \times 1_{Q/G}} & \\
    (Q\times Q)/G \ar[r]_{\Phi_{\calA_d}} & \tilde{G} \times Q/G \ar@/_1.5pc/[uur]_{\hat{L}_d} &
  }
\end{equation*}

Therefore, the only missing piece is the force $f_d$ and the
corresponding variational principle. Since the dynamics of the reduced
system will take place on $\tilde{G} \times Q/G$, which is our model
space for $(Q \times Q)/G$, we want $f_d$ to ``drop'' to a section of
a bundle involving the cotangent bundle $T^*(\tilde{G} \times Q/G)$.

Setting $\Psi_{\calA_d} := \Phi_{\calA_d}^{-1}$, we can consider the
following diagram:
$$\xymatrixcolsep{5pc}\xymatrixrowsep{4pc}\xymatrix{
  \Psi_{\calA_d}^*(T^*(Q \times Q)/G) \ar[d]_{\pr_1}
  \ar[r]^{\pr_2} & T^*(Q \times Q)/G \ar[d]^{\tilde{\pi}_G} \\
  \tilde{G} \times Q/G \ar[r]_{\Psi_{\calA_d}} & (Q \times Q)/G }$$
where $\tilde{\pi}_G([\alpha_{(q_0,q_1)}]) := [(q_0,q_1)]$ and
\begin{equation*}
  \begin{split}
    \Psi_{\calA_d}^* (T^*(Q \times Q)/G) &= \{ ((v_0,\tau_1) , [\alpha_{(q_0,q_1)}] ) \in (\tilde{G} \times Q/G) \times (T^*(Q \times Q)/G) \\
    &\mid \Psi_{\calA_d}(v_0,\tau_1) = [(q_0,q_1)] \}.
  \end{split}
\end{equation*}

We want to identify the space $\Psi_{\calA_d}^*(T^*(Q \times Q)/G)$
with another vector bundle over $\tilde{G} \times Q/G$ that has a
simpler structure. In order to do that, we focus on the tangent side
first.

\begin{proposition}\label{isomorphisms-prop}
  Let $\calA_d$ be an affine discrete connection on $\pi : Q \lra Q/G$
  and $\tilde{\frakA}$ be a principal connection on
  $\tilde{\pi} : Q \times Q \lra (Q \times Q)/G$. There exists an
  isomorphism of vector bundles over $\tilde{G} \times Q/G$
  $$\Psi_{\calA_d}^*(T(Q \times Q)/G) \simeq T(\tilde{G} \times Q/G) \oplus \hat{\frakg},$$
  where $\hat{\frakg} := \Psi_{\calA_d}^* \tilde{\frakg}$.
  
  Explicitly, the isomorphism is given by
  \begin{equation*}
    \begin{split}
      \calX : \Psi_{\calA_d}^*(T(Q \times Q)/G) &\lra T(\tilde{G} \times Q/G) \oplus \hat{\frakg} \\
      \calX((v_0,\tau_1) , [\delta q_0,\delta q_1]) &:=
      T\Upsilon(\delta q_0,\delta q_1) \oplus [(q_0,q_1) ,
      \tilde{\frakA}(\delta q_0,\delta q_1)],
    \end{split}
  \end{equation*}
  where $\Psi_{\calA_d}(v_0,\tau_1) = [(q_0,q_1)]$ and we are omitting
  the first factor of the element of $\hat{\frakg}$, which is
  $(v_0,\tau_1$). Its inverse is given by
  \begin{equation*}
    \begin{split}
      \calY : T(\tilde{G} \times Q/G) \oplus \tilde{\frakg} &\lra \Psi_{\calA_d}^*(T(Q \times Q)/G) \\
      \calY((\delta v_0,\delta \tau_1) \oplus [(q_0,q_1),\xi]) &:= \left( (v_0,\tau_1) , \left[ h^{(q_0,q_1)} \left( T\Psi_{\calA_d}(\delta v_0,\delta \tau_1) \right) + \xi_{Q \times Q}(q_0,q_1) \right] \right),
    \end{split}
  \end{equation*}
  where $(v_0,\tau_1) = \Upsilon(q_0,q_1)$.
\end{proposition}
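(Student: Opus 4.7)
The plan is to recognize the claimed isomorphism as the composition of three elementary constructions, and then to check that the explicit formulas for $\calX$ and $\calY$ realize this composition. First, the principal connection $\tilde{\frakA}$ on $\tilde{\pi} : Q \times Q \lra (Q \times Q)/G$ yields, via the construction recalled in Section 3.1 from \cite{C-M-R}, a vector bundle isomorphism $\alpha_{\tilde{\frakA}} : T(Q \times Q)/G \lra T((Q \times Q)/G) \oplus \tilde{\frakg}$ over $(Q \times Q)/G$. Pulling back along the diffeomorphism $\Psi_{\calA_d}$, and using that $\Psi_{\calA_d}^{*} T((Q \times Q)/G) \simeq T(\tilde{G} \times Q/G)$ via $T\Psi_{\calA_d}$ together with the defining identity $\hat{\frakg} := \Psi_{\calA_d}^{*} \tilde{\frakg}$, produces the claimed isomorphism abstractly. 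The remaining work is to show that $\calX$ and $\calY$ give exactly this composition.

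The crucial identity for matching up the formulas is $\Psi_{\calA_d} \circ \Upsilon = \tilde{\pi}$, which follows from $\Upsilon = \Phi_{\calA_d} \circ \tilde{\pi}$ and $\Psi_{\calA_d} = \Phi_{\calA_d}^{-1}$. Differentiating yields $T\Psi_{\calA_d} \circ T\Upsilon = T\tilde{\pi}$, so the horizontal component of $\calX$ coincides with the transport of $T\tilde{\pi}$ through $T\Psi_{\calA_d}^{-1}$, while the $\hat{\frakg}$-component is the straightforward pullback of the connection-valued term of $\alpha_{\tilde{\frakA}}$.

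Next I would verify well-definedness. For $\calX$, if a different representative $(\delta q_0', \delta q_1') = Tl_g^{Q \times Q}(\delta q_0,\delta q_1)$ is chosen, then $T\Upsilon(\delta q_0',\delta q_1') = T\Upsilon(\delta q_0,\delta q_1)$ because $\Upsilon$ is $G$-invariant, while
\[
[(q_0',q_1'),\tilde{\frakA}(\delta q_0',\delta q_1')] = [l_g^{Q \times Q}(q_0,q_1),\Ad_g \tilde{\frakA}(\delta q_0,\delta q_1)] = [(q_0,q_1),\tilde{\frakA}(\delta q_0,\delta q_1)]
\]
by the equivariance of the principal connection and the defining relation of the adjoint bundle. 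For $\calY$, independence from the choice of $(q_0,q_1)$ and of $\xi$ follows analogously from the $G$-equivariance of horizontal lifts and the naturality of the infinitesimal generator.

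Finally, to check $\calY \circ \calX = \mathrm{id}$, applying the two formulas in succession produces a class represented by
\[
h^{(q_0,q_1)}\!\bigl(T\tilde{\pi}(\delta q_0,\delta q_1)\bigr) + \tilde{\frakA}(\delta q_0,\delta q_1)_{Q \times Q}(q_0,q_1),
\]
which is exactly the horizontal-vertical decomposition of $(\delta q_0,\delta q_1)$ induced by $\tilde{\frakA}$ and therefore recovers $[\delta q_0,\delta q_1]$. The opposite composition $\calX \circ \calY = \mathrm{id}$ follows from the two defining axioms of a principal connection, namely $\tilde{\frakA}(\xi_{Q \times Q}) = \xi$ and $T\tilde{\pi} \circ h^{(q_0,q_1)} = \mathrm{id}$. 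Smoothness and fiberwise linearity are routine. The main obstacle is really bookkeeping: keeping track of how the three different $G$-quotients (of $Q \times Q$, of $Q \times \frakg$ and of $Q \times G$) interact with $\tilde{\pi}$, $\Phi_{\calA_d}$ and $\Upsilon$, so that the abstract composition and the explicit formulas are manifestly the same map.
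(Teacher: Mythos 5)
Your proof is correct and follows essentially the same route as the paper: compose the isomorphism $\alpha_{\tilde{\frakA}}$ induced by the principal connection with the identifications obtained by pulling back along the diffeomorphism $\Psi_{\calA_d}$ (the paper packages the latter as Lemma \ref{pullback-difeo} and the map $\sigma = T\Psi_{\calA_d}\oplus\pr_2$). Your additional verification that the explicit formulas for $\calX$ and $\calY$ are well defined and mutually inverse, via $\Psi_{\calA_d}\circ\Upsilon=\tilde{\pi}$ and the horizontal--vertical decomposition, goes slightly beyond what the paper writes out and is sound.
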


We will use the following auxiliary lemma to prove the proposition.

\begin{lemma}\label{pullback-difeo}
  Let $\chi : W \lra X$ be a vector bundle and $Y$ be a smooth
  manifold. If $\Phi : Y \lra X$ is a diffeomorphism, then there
  exists an isomorphism of vector bundles $\Phi^* W \simeq W$.
\end{lemma}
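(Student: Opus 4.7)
The plan is to exhibit the canonical projection from $\Phi^{*}W$ onto $W$ as the desired bundle isomorphism covering the base diffeomorphism $\Phi$. Recall that by construction
\[
\Phi^{*}W = \{(y,w) \in Y \times W \mid \Phi(y) = \chi(w)\},
\]
with bundle projection $\pi_{\Phi^{*}W} : \Phi^{*}W \lra Y$, $(y,w) \mapsto y$, and the fiber over $y$ is canonically $\{y\} \times W_{\Phi(y)}$, carrying the obvious vector space structure transported from $W_{\Phi(y)}$. Since $\Phi$ and $\chi$ are transverse (in fact, $\Phi$ is a submersion), $\Phi^{*}W$ is a smooth submanifold of $Y \times W$ and a vector bundle over $Y$.

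Define $\Theta : \Phi^{*}W \lra W$ as the restriction of the second projection $\pr_{2} : Y \times W \lra W$. Then $\Theta$ is smooth, and the identity $\chi \circ \Theta = \Phi \circ \pi_{\Phi^{*}W}$ follows directly from the defining condition $\Phi(y) = \chi(w)$, so $\Theta$ covers the diffeomorphism $\Phi$. Restricted to the fiber $(\Phi^{*}W)_{y} = \{y\} \times W_{\Phi(y)}$, the map $\Theta$ sends $(y,w)$ to $w$; this is clearly a linear isomorphism onto $W_{\Phi(y)}$.

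For the inverse, since $\Phi$ is a diffeomorphism, define $\Xi : W \lra \Phi^{*}W$ by $\Xi(w) := (\Phi^{-1}(\chi(w)), w)$. The smoothness of $\Xi$ follows from the smoothness of $\Phi^{-1}$ and $\chi$, and the image genuinely lies in $\Phi^{*}W$ because $\Phi(\Phi^{-1}(\chi(w))) = \chi(w)$. The identities $\Theta \circ \Xi = \operatorname{id}_{W}$ and $\Xi \circ \Theta = \operatorname{id}_{\Phi^{*}W}$ are immediate from the definitions, so $\Theta$ is a diffeomorphism and a fiberwise linear isomorphism, i.e., an isomorphism of vector bundles covering $\Phi$.

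There is no real obstacle here: the statement is essentially a bookkeeping lemma, and the only subtle point is interpreting ``isomorphism of vector bundles'' in the sense of covering a diffeomorphism of the bases rather than the identity, since $\Phi^{*}W$ lives over $Y$ while $W$ lives over $X$. Once that is made explicit, everything reduces to checking the definition of the fibered product.
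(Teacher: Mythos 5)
Your proof is correct and follows exactly the route the paper takes: the paper simply asserts that the second projection $\pr_2^{\Phi^*W}$ is the required isomorphism, and you supply the details (fiberwise linearity, the explicit inverse $w \mapsto (\Phi^{-1}(\chi(w)), w)$, and the observation that the isomorphism covers $\Phi$ rather than the identity) that the paper leaves to the reader.
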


\begin{proof}
  We have the diagram
  $$\xymatrixcolsep{5pc}\xymatrixrowsep{4pc}\xymatrix{
    \Phi^* W \ar[d]_-{\pr_1^{\Phi^* W}} \ar[r]^-{\pr_2^{\Phi^* W}} & W \ar[d]^-\chi \\
    Y \ar[r]_-\Phi & X \\
  }$$ where
  $$\Phi^* W = \{ (y,w_x) \mid \Phi(y) = \chi(w_x) \} \subs Y \times W$$
  and the projections $\pr_i^{\Phi^*W}$ are the usual ones on
  each factor. It is easy to check that $\pr_2^{\Phi^*W}$ is the
  required isomorphism.
\end{proof}

\begin{proof}[Proof of Proposition \ref{isomorphisms-prop}]
  Let us apply Lemma \ref{pullback-difeo} to our context of the
  proposition. If $\pi : Q \lra Q/G$ is a principal $G$-bundle, given
  an affine discrete connection $\calA_d$, we have the following
  diagram:
  $$\xymatrixcolsep{7pc}\xymatrixrowsep{5pc}\xymatrix{
    \Psi_{\calA_d}^*(T(Q \times Q)/G) \ar[d]_-{\pr_1^{\Psi_{\calA_d}^*(T(Q \times Q)/G)}} \ar[r]^-{\pr_2^{\Psi_{\calA_d}^*(T(Q \times Q)/G)}} & T(Q \times Q)/G \ar[d] \\
    \tilde{G} \times Q/G \ar[r]_-{\Psi_{\calA_d}} & (Q \times Q)/G \\
  }$$
	
  Since $\Psi_{\calA_d}$ is a diffeomorphism, by Lemma \ref{pullback-difeo}, there exists an isomorphism of vector bundles
  \begin{equation}\label{isomorphisms-prop-1}
    \Psi_{\calA_d}^*(T(Q \times Q)/G) \simeq \frac{T(Q \times Q)}{G}.
  \end{equation}
  
  On the other hand, since $T\Psi_{\calA_d}$ and
  $\pr_2^{\Psi_{\calA_d}^* \tilde{\frakg}}$ are both isomorphisms of
  vector bundles (the former because $\Psi_{\calA_d}$ is a
  diffeomorphism and the latter because of Lemma \ref{pullback-difeo},
  applied to the adjoint bundle $\tilde{\frakg}$), we can define an
  isomorphism of vector bundles
  \[
    \begin{split}
      \sigma : T(\tilde{G} \times Q/G) \oplus \Psi_{\calA_d}^* \tilde{\frakg} &\lra T((Q \times Q)/G) \oplus \tilde{\frakg} \\
      \sigma &:= T\Psi_{\calA_d} \oplus \pr_2^{\Psi_{\calA_d}^*
        \tilde{\frakg}}.
    \end{split}
  \]
	
  Therefore, given a principal connection $\tilde{\frakA}$ on
  $\tilde{\pi} : Q \times Q \lra (Q \times Q)/G$, we have the
  isomorphisms
  $$\xymatrixcolsep{5pc}\xymatrixrowsep{4pc}\xymatrix{
    T(Q \times Q)/G \ar[r]^-{\alpha_{\tilde{\frakA}}} \ar[dr] & T((Q \times Q)/G) \oplus \tilde{\frakg} \ar[d] & T(\tilde{G} \times Q/G) \oplus \Psi_{\calA_d}^* \tilde{\frakg} \ar[l]_-{\sigma} \ar[d] \\
    & (Q \times Q)/G & \tilde{G} \times Q/G \ar[l]_-{\Psi_{\calA_d}}
  }$$
	
  Adding this to the isomorphism \eqref{isomorphisms-prop-1}, we have
  an isomorphism of vector bundles over $\tilde{G} \times Q/G$
  $$\Psi_{\calA_d}^*(T(Q \times Q)/G) \simeq T(\tilde{G} \times Q/G) \oplus \Psi_{\calA_d}^* \tilde{\frakg},$$
  where
  \begin{equation*}
    \begin{split}
      \Psi_{\calA_d}^* \tilde{\frakg} &= \{ \left( (v_0,\tau_1) , [(q_0,q_1),\xi] \right) \mid \Psi_{\calA_d}(v_0,\tau_1) = [(q_0,q_1)] \} \\
      &= \{ \left( (v_0,\tau_1) , [(q_0,q_1),\xi] \right) \mid
      (v_0,\tau_1) = \Upsilon(q_0,q_1) \}.
    \end{split}
  \end{equation*}
	
  With this idea in mind, to avoid a clumsier notation, we have called
  this fiber bundle $\hat{\frakg}$.
\end{proof}

\begin{corollary}
  Let $\calA_d$ be an affine discrete connection on $\pi : Q \lra Q/G$
  and $\tilde{\frakA}$ be a principal connection on
  $\tilde{\pi} : Q \times Q \lra (Q \times Q)/G$. There exists an
  isomorphism of vector bundles over $\tilde{G} \times Q/G$
$$\Psi_{\calA_d}^*(T^*(Q \times Q)/G) \simeq T^*(\tilde{G} \times Q/G) \oplus \hat{\frakg}^*.$$
\end{corollary}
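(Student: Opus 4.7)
The plan is to obtain the corollary by dualizing the vector bundle isomorphism established in Proposition \ref{isomorphisms-prop}. Since the two sides of
$$\Psi_{\calA_d}^*(T(Q \times Q)/G) \simeq T(\tilde{G} \times Q/G) \oplus \hat{\frakg}$$
are vector bundles over $\tilde G\times Q/G$, and the map $\calX$ (with inverse $\calY$) is an isomorphism of vector bundles, its fiberwise dual $\calX^* : (T(\tilde{G} \times Q/G) \oplus \hat{\frakg})^* \lra (\Psi_{\calA_d}^*(T(Q \times Q)/G))^*$ is again an isomorphism of vector bundles over $\tilde G\times Q/G$.

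Next, I would identify each of the duals in standard terms. On the right-hand side, duality commutes with direct sum, so
$$(T(\tilde{G} \times Q/G) \oplus \hat{\frakg})^* \simeq T^*(\tilde{G} \times Q/G) \oplus \hat{\frakg}^*,$$
with the obvious pairing; note that by definition $\hat{\frakg}^* = \Psi_{\calA_d}^*\tilde{\frakg}^*$. On the left-hand side, I would combine two standard facts: duality commutes with pullback, so $(\Psi_{\calA_d}^* E)^* \simeq \Psi_{\calA_d}^* E^*$ for any vector bundle $E$ over $(Q\times Q)/G$; and since $G$ acts freely and properly on $Q\times Q$ by diffeomorphisms (so the lifted action on $T(Q\times Q)$ and $T^*(Q\times Q)$ is by vector bundle automorphisms covering this action), there is a canonical vector bundle isomorphism $(T(Q\times Q)/G)^* \simeq T^*(Q\times Q)/G$ over $(Q\times Q)/G$, induced by the natural pairing $[\alpha_{(q_0,q_1)}]([\delta q_0,\delta q_1]) := \alpha_{(q_0,q_1)}(\delta q_0,\delta q_1)$, which is well defined because the diagonal $G$-action on $Q\times Q$ lifts compatibly to tangent and cotangent bundles. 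Combining,
$$(\Psi_{\calA_d}^*(T(Q \times Q)/G))^* \simeq \Psi_{\calA_d}^*(T^*(Q \times Q)/G).$$

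Putting these three identifications together with $\calX^*$ yields the stated isomorphism
$$\Psi_{\calA_d}^*(T^*(Q \times Q)/G) \simeq T^*(\tilde{G} \times Q/G) \oplus \hat{\frakg}^*$$
of vector bundles over $\tilde G\times Q/G$. The only step that requires any real argument (as opposed to invoking a generality) is the identification $(T(Q\times Q)/G)^* \simeq T^*(Q\times Q)/G$; this is the main obstacle, but it is routine given that the $G$-action is by diffeomorphisms and hence its tangent and cotangent lifts are mutually dual vector bundle automorphisms, so the pairing descends to the quotients. Everything else is functoriality of $(-)^*$, pullback, and direct sum.
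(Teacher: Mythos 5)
Your argument is correct and follows the same route the paper intends: the corollary is stated there without proof precisely because it is regarded as the fiberwise dualization of Proposition \ref{isomorphisms-prop}, which is what you carry out, including the key identification $(T(Q\times Q)/G)^* \simeq T^*(Q\times Q)/G$ via the $G$-invariant pairing (the paper uses the same device earlier when defining $(\alpha_\frakA^{-1})^*$). Nothing is missing.
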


Considering the isomorphisms just described, we have the following result.

\begin{proposition}\label{f_d-hat}
  Let $\calA_d$ be an affine discrete connection on $\pi : Q \lra Q/G$
  and $\tilde{\frakA}$ be a principal connection on
  $\tilde{\pi} : Q \times Q \lra (Q \times Q)/G$. A $G$-equivariant
  section $f_d : Q \times Q \lra T^*(Q \times Q)$ induces a section
  $\hat{f}_d : \tilde{G} \times (Q/G) \lra T^*(\tilde{G} \times (Q/G))
  \oplus \hat{\frakg}^*$. Explicitly,
  $$\hat{f}_d(v_0,\tau_1)((\delta v_0,\delta \tau_1) \oplus [(q_0,q_1),\xi]) = f_d(q_0,q_1)(\delta q_0,\delta q_1),$$
  for $(\delta q_0,\delta q_1) \in T(Q \times Q)$ over $(q_0,q_1)$
  such that
  $(\delta v_0,\delta \tau_1) = T\Upsilon (\delta q_0,\delta q_1)$ and
  $\xi = \tilde{\frakA}(\delta q_0,\delta q_1)$.
\end{proposition}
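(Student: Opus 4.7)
The plan is to construct $\hat{f}_d$ in three canonical steps and then match the resulting object with the explicit formula in the statement. First, since $f_d$ is $G$-equivariant with respect to the cotangent lift of the diagonal action, the assignment $[(q_0,q_1)] \mapsto [f_d(q_0,q_1)]$ is a well-defined section $\bar{f}_d$ of the quotient bundle $T^*(Q\times Q)/G \to (Q\times Q)/G$. Second, since $\Psi_{\calA_d} : \tilde{G}\times Q/G \to (Q\times Q)/G$ is a diffeomorphism, we may pull $\bar{f}_d$ back along $\Psi_{\calA_d}$ to obtain a section of $\Psi_{\calA_d}^*(T^*(Q\times Q)/G)$ over $\tilde{G}\times Q/G$; explicitly, this section sends $(v_0,\tau_1)$ to $((v_0,\tau_1),[f_d(q_0,q_1)])$ for any representative $(q_0,q_1)$ of $\Psi_{\calA_d}(v_0,\tau_1)$. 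Third, we apply the corollary to the previous proposition to identify $\Psi_{\calA_d}^*(T^*(Q\times Q)/G)$ with $T^*(\tilde{G}\times Q/G) \oplus \hat{\frakg}^*$, and define $\hat{f}_d$ as the image of the pulled-back section under this identification.

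It then remains to verify that this abstractly defined $\hat{f}_d$ coincides with the explicit formula in the statement. I would do this by dualizing the isomorphism $\calY$ from Proposition \ref{isomorphisms-prop}: an element $(\delta v_0,\delta \tau_1)\oplus[(q_0,q_1),\xi]$ of $T(\tilde{G}\times Q/G)\oplus\hat{\frakg}$ is sent by $\calY$ to the class represented by $\delta q := h^{(q_0,q_1)}(T\Psi_{\calA_d}(\delta v_0,\delta \tau_1)) + \xi_{Q\times Q}(q_0,q_1)$. By construction of $\hat{f}_d$ via dualization, the pairing $\hat{f}_d(v_0,\tau_1)\bigl((\delta v_0,\delta \tau_1)\oplus[(q_0,q_1),\xi]\bigr)$ equals $f_d(q_0,q_1)(\delta q)$. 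Writing $\delta q = (\delta q_0,\delta q_1)$, the identity $\calX\circ\calY = \text{id}$ gives $T\Upsilon(\delta q_0,\delta q_1) = (\delta v_0,\delta \tau_1)$ and $\tilde{\frakA}(\delta q_0,\delta q_1) = \xi$, which is exactly the characterization of $(\delta q_0,\delta q_1)$ given in the statement.

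The step I expect to require the most care is checking that this prescription is genuinely well defined, i.e., independent of all representatives involved. On one hand, a different choice of representative $(q_0,q_1)$ of $[(q_0,q_1)] \in (Q\times Q)/G$ replaces $(\delta q_0,\delta q_1)$ by the tangent lift of the $G$-action applied to it, and $G$-invariance of $f_d$ as a $1$-form makes the value of $f_d(q_0,q_1)(\delta q_0,\delta q_1)$ unchanged. On the other hand, two tangent vectors $(\delta q_0,\delta q_1)$ and $(\delta q_0',\delta q_1')$ at the same $(q_0,q_1)$ with equal images under $T\Upsilon$ and $\tilde{\frakA}$ coincide, because the pair $(T\tilde\pi,\tilde{\frakA})$ is fiberwise injective on $T(Q\times Q)$ by the defining property of a principal connection, and $T\Upsilon = T\Phi_{\calA_d}\circ T\tilde\pi$ with $\Phi_{\calA_d}$ a diffeomorphism. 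Once these two checks are in place, the explicit formula for $\hat{f}_d$ follows from the computation above.
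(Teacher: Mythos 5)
Your argument is correct and follows exactly the route the paper intends: the paper states Proposition \ref{f_d-hat} without proof, presenting it as an immediate consequence of the quotient/pullback identifications and the corollary identifying $\Psi_{\calA_d}^*(T^*(Q\times Q)/G)$ with $T^*(\tilde{G}\times Q/G)\oplus\hat{\frakg}^*$, which is precisely the three-step construction you carry out. Your verification via the dual of $\calY$ and the two well-definedness checks (change of representative of $[(q_0,q_1)]$, and injectivity of $(T\tilde{\pi},\tilde{\frakA})$ on each tangent space) supplies the details the paper leaves implicit.
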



\subsection{Reduced dynamics}

In what follows, $\pi : Q \lra Q/G$ will be a principal $G$-bundle and
$\calA_d$ and $\frakA$ will be an affine discrete connection and a
principal connection on it, respectively.

The following lemma, whose proof is a straightforward computation,
will allow us to shorten the statement of the theorem we want to
state.

\begin{lemma}
  A principal connection $\frakA$ on $\pi : Q \lra Q/G$ induces a
  principal connection $\tilde{\frakA}$ on
  $\tilde{\pi} : Q \times Q \lra (Q \times Q)/G$ given by
  $\tilde{\frakA} := \frac{1}{2} (\pr_1^* \frakA + \pr_2^*
  \frakA)$. Explicitly,
  $$\tilde{\frakA}(\delta q_0,\delta q_1) := \frac{1}{2} \left( \frakA(\delta q_0) + \frakA(\delta q_1) \right).$$
\end{lemma}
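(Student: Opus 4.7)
The plan is to verify directly the two defining axioms of a principal connection for $\tilde{\frakA}$, using only the fact that $\frakA$ satisfies them and that $l^{Q \times Q}$ is the diagonal action on $Q\times Q$. Implicit in the statement is that $\tilde{\pi} : Q \times Q \lra (Q \times Q)/G$ is itself a principal $G$-bundle; this is immediate because the diagonal action of $G$ on $Q\times Q$ inherits freeness and properness from $l^Q$.

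First I would record the two decompositions that drive everything. By differentiating the diagonal action in the group variable at the identity, the infinitesimal generator is
\[
  \xi_{Q \times Q}(q_0,q_1) = (\xi_Q(q_0),\xi_Q(q_1)) \in T_{q_0}Q \oplus T_{q_1}Q,
\]
and by differentiating in the manifold variable we get, for $g \in G$ and $(\delta q_0,\delta q_1) \in T_{(q_0,q_1)}(Q\times Q)$,
\[
  Tl^{Q\times Q}_g(\delta q_0,\delta q_1) = (Tl^Q_g(\delta q_0), Tl^Q_g(\delta q_1)).
\]
These identifications use precisely the splitting $T(Q\times Q) \simeq \pr_1^*TQ \oplus \pr_2^*TQ$ from Section~\ref{subsec-preliminaries}, so that $\tilde{\frakA}(\delta q_0,\delta q_1) = \tfrac{1}{2}(\frakA(\delta q_0) + \frakA(\delta q_1))$ is well defined and $\frakg$-valued.

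Next I would check axiom (1). Applying $\tilde{\frakA}$ to $\xi_{Q\times Q}(q_0,q_1)$ and invoking the analogous property for $\frakA$ on each factor,
\[
  \tilde{\frakA}(\xi_Q(q_0),\xi_Q(q_1)) = \tfrac{1}{2}\bigl(\frakA(\xi_Q(q_0)) + \frakA(\xi_Q(q_1))\bigr) = \tfrac{1}{2}(\xi + \xi) = \xi.
\]
For axiom (2), using the second decomposition above and the $\Ad$-equivariance of $\frakA$ together with the $\rr$-linearity of $\Ad_g$ on $\frakg$,
\[
  \tilde{\frakA}\bigl(Tl^{Q\times Q}_g(\delta q_0,\delta q_1)\bigr) = \tfrac{1}{2}\bigl(\Ad_g\frakA(\delta q_0) + \Ad_g\frakA(\delta q_1)\bigr) = \Ad_g\tilde{\frakA}(\delta q_0,\delta q_1).
\]

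There is no real obstacle here; the only mild subtlety is keeping track of the identification $T(Q\times Q) \simeq \pr_1^*TQ \oplus \pr_2^*TQ$ so that the pullbacks $\pr_i^*\frakA$ are correctly interpreted as evaluating $\frakA$ on the $i$-th component. Once that is settled, both axioms reduce to the corresponding axioms for $\frakA$ applied componentwise, and the formula for $\tilde{\frakA}(\delta q_0,\delta q_1)$ follows by unwinding the definition of $\pr_i^*\frakA$.
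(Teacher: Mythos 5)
Your verification is correct and is exactly the ``straightforward computation'' the paper alludes to without writing out: checking the two principal-connection axioms componentwise, using that the diagonal action has infinitesimal generator $\xi_{Q\times Q}(q_0,q_1)=(\xi_Q(q_0),\xi_Q(q_1))$ and tangent lift $Tl^{Q\times Q}_g=(Tl^Q_g,Tl^Q_g)$, together with linearity of $\Ad_g$. Nothing is missing; the remark that $\tilde{\pi}$ is itself a principal $G$-bundle is a sensible touch the paper takes for granted.
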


Let $(Q,L_d,f_d)$ be a FDMS. We will use the following version of the
force $f_d$:
$\check{f}_d := \tilde{\Psi}_{\calA_d}^* f_d \in \Omega^1(Q \times G
\times Q/G)$, which can be decomposed as
\begin{equation*}
  \begin{split}
    \check{f}_d(q_0,w_0,\tau_1)(\delta q_0,\delta w_0,\delta \tau_1) &= \check{f}_d^1(q_0,w_0,\tau_1)(\delta q_0) + \check{f}_d^2(q_0,w_0,\tau_1)(\delta w_0) \\
    & \ \ \ \ \ + \check{f}_d^3(q_0,w_0,\tau_1)(\delta \tau_1)
  \end{split}
\end{equation*}
for all
$(\delta q_0,\delta w_0,\delta \tau_1) \in T_{(q_0,w_0,\tau_1)}(Q
\times G \times Q/G)$.

\begin{theorem}\label{teor-red-forzada}
  Let $q_\cdot = (q_0,\ldots,q_N)$ be a discrete curve in $Q$ and let
  \begin{equation*}
    \begin{split}
      \tau_k &:= \pi(q_k), \quad 1 \le k \le N, \\
      w_k &:= \calA_d(q_k,q_{k+1}), \quad v_k := \rho(q_k,w_k), \quad
      0 \le k \le N-1
    \end{split}
  \end{equation*}
  be the corresponding discrete curves in $Q/G$, $G$ and
  $\tilde{G}$. Then, given a forced discrete mechanical system
  $(Q,L_d,f_d)$ with symmetry group $G$, the following statements are
  equivalent.
  \begin{enumerate}
  \item \label{it:teor-red-forzada-vp_q} $q_\cdot$ satisfies the
    variational principle \eqref{forced-variational principle}.
  \item \label{it:teor-red-forzada-em_q} $q_\cdot$ satisfies the forced
    discrete Euler-Lagrange equations \eqref{forcedELe}.
  \item \label{it:teor-red-forzada-rvp_q}
    $(v_\cdot,\tau_\cdot) = ((v_0,\tau_1),\ldots,(v_{N-1},\tau_N))$
    satisfies
    \begin{equation*}
      \begin{split}
        \delta &\left( \sum_{k=0}^{N-1} \hat{L}_d(v_k,\tau_{k+1}) \right) \\
        & \ \ \ \ \ + \sum_{k=0}^{N-1} \hat{f}_d(v_k,\tau_{k+1})
        \left( (\delta v_k,\delta \tau_{k+1}) \oplus \left[
            (q_k,q_{k+1}) , \frac{1}{2}(\xi_k + \xi_{k+1}) \right]
        \right) = 0
      \end{split}
    \end{equation*}
    for all infinitesimal variations
    $(\delta v_\cdot, \delta \tau_\cdot,\xi_\cdot)$ satisfying:
    \begin{enumerate}
    \item $\delta \tau_k \in T_{\tau_k} (Q/G)$ for $k=1,\ldots,N$,
    \item $\xi_\cdot =(\xi_0,\ldots,\xi_N)\in \Lie(G)^{N+1}$,
    \item for $k=0,\ldots,N-1$,
      \[
        \begin{split}
          \delta v_k &= T_{(q_k,w_k)}\rho(h^{q_k}(\delta
          \tau_k),T_{(q_k,q_{k+1})}\mathcal{A}_d(h^{q_k}(\delta
          \tau_k),h^{q_{k+1}}(\delta \tau_{k+1}))) \\
          & \ \ \ + T_{(q_k,w_k)}\rho((\xi_k)_Q(q_k),
          T_{(q_k,q_{k+1})}\mathcal{A}_d((\xi_k)_Q(q_k),
          (\xi_{k+1})_Q(q_{k+1})))
        \end{split}
      \]
      where $h$ is the horizontal lift associated to $\frakA$,
      $\delta \tau_0\in T_{\pi(q_0)}(Q/G)$,
    \item and the ``fixed endpoint conditions":
      $\delta \tau_0 = 0$\hspace{.1pc}\footnote{Notice that
        $\delta \tau_0$ is just a convenient device to simplify the
        expression of the $\delta v_k$.}, $\delta \tau_N=0$,
      $\xi_0=\xi_N=0$.
    \end{enumerate}
    
  \item \label{it:teor-red-forzada-rem_q} $(v_\cdot,\tau_\cdot)$ satisfies the
    following conditions for each fixed
    $(v_{k-1},\tau_k,v_k,\tau_{k+1})$, with $1 \le k \le N-1$.
    \begin{itemize}
    \item $\phi_{f_d} = 0$ for $\phi_{f_d} \in T_{\tau_k}^*(Q/G)$
      defined by
      \begin{equation}\label{red-traj-eq1}
        \begin{split}
          \phi_{f_d} &:= D_1 \check{L}_d (q_k,w_k,\tau_{k+1}) \circ h^{q_k} + D_3 \check{L}_d (q_{k-1},w_{k-1},\tau_k) \\
          &+ D_2 \check{L}_d (q_k,w_k,\tau_{k+1}) D_1\calA_d(q_k,q_{k+1}) \circ h^{q_k} \\
          &+ D_2 \check{L}_d (q_{k-1},w_{k-1},\tau_k) D_2\calA_d(q_{k-1},q_k) \circ h^{q_k} \\
          &+ \check{f}_d^1(q_k,w_k,\tau_{k+1}) \circ h^{q_k} + \check{f}_d^3(q_{k-1},w_{k-1},\tau_k) \\
          &+ \check{f}_d^2(q_k,w_k,\tau_{k+1}) D_1\calA_d(q_k,q_{k+1}) \circ h^{q_k} \\
          &+ \check{f}_d^2(q_{k-1},w_{k-1},\tau_k)
          D_2\calA_d(q_{k-1},q_k) \circ h^{q_k}.
        \end{split}
      \end{equation}
			
    \item $\psi_{f_d}(\xi_k) = 0$ for all $\xi_k$, where
      $\psi_{f_d} \in \frakg^*$ is defined by
      \begin{equation}\label{red-traj-eq2}
        \begin{split}
          \psi_{f_d}(\xi_k) &:= \left( D_2 \check{L}_d (q_{k-1},w_{k-1},\tau_k) w_{k-1}^{-1} - D_2 \check{L}_d (q_k,w_k,\tau_{k+1}) w_k^{-1} \right) (\xi_k) \\
          &+ \left( \check{f}_d^1(q_k,w_k,\tau_{k+1}) + \check{f}_d^2(q_k,w_k,\tau_{k+1}) D_1 \calA_d(q_k,q_{k+1}) \right. \\
          &+ \left. \check{f}_d^2(q_{k-1},w_{k-1},\tau_k) D_2 \calA_d
            (q_{k-1},q_k) \right) \left( (\xi_k)_Q(q_k) \right),
        \end{split}
      \end{equation}
      where we are using the notation $w \xi := T_e L_w (\xi)$, for
      $w \in G$ and $\xi \in \frakg$, and with $L_w$ being the left
      translation.
    \end{itemize}
  \end{enumerate}
\end{theorem}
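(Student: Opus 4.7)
The plan is to establish the chain of equivalences in three stages: $(1) \Leftrightarrow (2)$, $(1) \Leftrightarrow (3)$, and $(3) \Leftrightarrow (4)$. The first is exactly the content of the preceding theorem (the forced discrete Euler--Lagrange equations characterizing trajectories), so no new argument is needed there. The substantive work is the variational reduction $(1) \Leftrightarrow (3)$ and the passage to the reduced algebraic equations $(3) \Leftrightarrow (4)$.

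For $(1) \Leftrightarrow (3)$, I would push the variational principle \eqref{forced-variational principle} down through the diagram \eqref{red-spaces}. Since $L_d$ is $G$-invariant, $L_d = \hat{L}_d \circ \Upsilon$ and the action sums agree term by term. For the force, Proposition \ref{f_d-hat} gives
\[
f_d(q_k,q_{k+1})(\delta q_k,\delta q_{k+1}) = \hat{f}_d(v_k,\tau_{k+1})\bigl((\delta v_k,\delta \tau_{k+1}) \oplus [(q_k,q_{k+1}),\tilde{\frakA}(\delta q_k,\delta q_{k+1})]\bigr),
\]
where $(\delta v_k,\delta \tau_{k+1}) = T\Upsilon(\delta q_k,\delta q_{k+1})$. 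Using $\tilde{\frakA} = \tfrac{1}{2}(\pr_1^*\frakA + \pr_2^*\frakA)$ and setting $\xi_k := \frakA(\delta q_k)$, the $\tilde{\frakg}$-entry becomes $\tfrac{1}{2}(\xi_k+\xi_{k+1})$, matching the expression in (3). Next, I would verify that, as $q_\cdot$ ranges over discrete curves with variations having fixed endpoints, the associated $(v_\cdot,\tau_\cdot,\delta \tau_\cdot, \xi_\cdot)$ range over exactly the set described by conditions (a)--(d): the formula for $\delta v_k$ comes from differentiating $v_k = \rho(q_k,\calA_d(q_k,q_{k+1}))$ after decomposing $\delta q_k = h^{q_k}(\delta \tau_k) + (\xi_k)_Q(q_k)$ via $\frakA$, and $\delta q_0 = \delta q_N = 0$ translates to $\delta \tau_0 = \delta \tau_N = 0$ and $\xi_0 = \xi_N = 0$. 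The converse lift uses that $\Upsilon$ is a principal $G$-bundle (Remark \ref{Upsilon-bundle}).

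For $(3) \Leftrightarrow (4)$, I would rewrite the reduced action using $\check{L}_d := \hat{L}_d \circ (\rho \times 1_{Q/G})$ and the decomposition of $\check{f}_d = \tilde{\Psi}_{\calA_d}^* f_d$ into $\check{f}_d^1,\check{f}_d^2,\check{f}_d^3$, then expand $\delta(\sum_k \check{L}_d(q_k,w_k,\tau_{k+1}))$ along the perturbation of $q_k,w_k,\tau_k$ induced by $(\delta \tau_\cdot,\xi_\cdot)$ through the formulas for $\delta q_k = h^{q_k}(\delta \tau_k) + (\xi_k)_Q(q_k)$ and $\delta w_k = D_1\calA_d(q_k,q_{k+1})\delta q_k + D_2\calA_d(q_k,q_{k+1})\delta q_{k+1}$. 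A summation-by-parts collects, for each interior $k$, the coefficients of $\delta \tau_k$ (yielding $\phi_{f_d}$ as in \eqref{red-traj-eq1}) and of $\xi_k$ (yielding $\psi_{f_d}$ as in \eqref{red-traj-eq2}). Since $\delta \tau_k$ and $\xi_k$ may be chosen freely and independently at each interior index, the vanishing of the full sum for all admissible variations is equivalent to the two equations holding for each $k = 1,\ldots,N-1$.

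The main obstacle I anticipate is the bookkeeping in the derivation of $\psi_{f_d}$. The perturbation $\xi_k$ enters simultaneously through the infinitesimal generator $(\xi_k)_Q(q_k)$ in the position slots of $\check{L}_d(q_k,w_k,\tau_{k+1})$ and $\check{L}_d(q_{k-1},w_{k-1},\tau_k)$, through the $G$-equivariance of $\calA_d$ (which forces $\delta w_k = T_{w_k}L_{w_k \calA_d(\cdot,\cdot)^{-1}}$-type corrections involving $\xi_k$), and through the $\check{f}_d^2$ contributions. Using the $G$-equivariance $\calA_d(l^Q_{g_0} q_0,l^Q_{g_1}q_1) = g_1\calA_d(q_0,q_1)g_0^{-1}$, the variation of $w_{k-1}$ in the direction $\xi_k$ picks up a factor of $w_{k-1}$ on the left and that of $w_k$ picks up a factor of $w_k^{-1}$ on the right; transporting these to $\frakg$ via the notation $w\xi = T_e L_w(\xi)$ is what produces the $D_2 \check{L}_d \cdot w^{-1}$ terms appearing in \eqref{red-traj-eq2}. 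Once this left-trivialization bookkeeping is arranged consistently with the analogous manipulation for the forcing terms, the rest of the argument reduces to organizing the remaining contributions by the two independent types of variation.
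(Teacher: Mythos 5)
Your proposal is correct and follows essentially the same route as the paper: (1)$\Leftrightarrow$(2) is quoted from the standard forced discrete Euler--Lagrange result, (1)$\Leftrightarrow$(3) is obtained by matching variations through $\Upsilon$ (decomposing $\delta q_k = h^{q_k}(\delta\tau_k)+(\xi_k)_Q(q_k)$ and using Proposition \ref{f_d-hat} with $\tilde{\frakA}=\tfrac12(\pr_1^*\frakA+\pr_2^*\frakA)$), and (3)$\Leftrightarrow$(4) by expanding the reduced action and force along these variations and separating the independent $\delta\tau_k$ and $\xi_k$ contributions, with the left-trivialization bookkeeping you flag being exactly what the paper defers to the computation of Theorem 5.11 of \cite{F-T-Z}. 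The only caveat is the side/sign conventions in that bookkeeping (equivariance of $\calA_d$ gives, for a variation $\xi_k$ at $q_k$, $\delta w_{k-1}=T_eR_{w_{k-1}}(\xi_k)$ and $\delta w_k=-T_eL_{w_k}(\xi_k)$, slightly different from how you phrased it), which does not affect the structure of your argument.
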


\begin{proof}
  \leavevmode \underline{$1 \Longleftrightarrow 2$}. It is a known
  result that can be found, for example, on Part Three of
  \cite{M-West} and in Section 5 of \cite{K-M-O-W}.

  \underline{$3 \Lra 1$}. Given a vanishing end points infinitesimal
  variation $\delta q_\cdot$ on $q_\cdot$, for $0 \le k \le N$, we can define
  $\delta \tau_k := T_{q_k}\pi (\delta q_k)$ and
  $\xi_k := \frakA(\delta q_k)$, so that
  $\delta q_k = h^{q_k}(\delta \tau_k) + (\xi_k)_Q(q_k)$. In addition,
  we can define $\delta v_k$ by
  \begin{equation*}
    \begin{split}
      \delta v_k &:= T_{(q_k,w_k)} \rho \left( h^{q_k}(\delta \tau_k) , T_{(q_k,q_{k+1})} \calA_d \left( \left( h^{q_k}(\delta \tau_k) , h^{q_{k+1}}(\delta \tau_{k+1}) \right) \right) \right) \\
      &+ T_{(q_k,w_k)} \rho \left( (\xi_k)_Q(q_k) , T_{(q_k,q_{k+1})}
        \calA_d \left( \left( (\xi_k)_Q(q_k) , (\xi_{k+1})_Q(q_{k+1})
          \right) \right) \right),
    \end{split}
  \end{equation*}
  for $0 \le k \le N-1$, so that
  $$(\delta v_k,\delta \tau_{k+1}) = T_{(q_k,q_{k+1})} \Upsilon (\delta q_k,\delta q_{k+1})$$
  and the end points satisfy the conditions of the statement, because
  $\delta q_\cdot$ has vanishing end points.
	
  Following the proof of Theorem 5.11 of \cite{F-T-Z}, we have
  $$d\frakS_d(q_\cdot)(\delta q_\cdot) = d\hat{\frakS}_d(v_\cdot,\tau_\cdot)(\delta v_\cdot,\delta \tau_\cdot),$$
  where
  $$\hat{\frakS}_d(v_\cdot,\tau_\cdot) := \sum_{k=0}^{N-1} \hat{L}_d(v_k,\tau_{k+1}).$$
	
  On the other hand, recalling Proposition \ref{f_d-hat},
  $$\sum_{k=0}^{N-1} f_d(q_k,q_{k+1})(\delta q_k,\delta q_{k+1}) = \sum_{k=0}^{N-1} \hat{f}_d(v_k,\tau_{k+1}) \left( (\delta v_k,\delta \tau_{k+1}) \oplus [(q_k,q_{k+1}) , \tilde{\xi}_k] \right),$$
  where $\tilde{\xi}_k = \frac{1}{2}(\xi_k + \xi_{k+1})$.
	
  Therefore, since
  $$d\hat{\frakS}_d(v_\cdot,\tau_\cdot)(\delta v_\cdot,\delta \tau_\cdot) + \sum_{k=0}^{N-1} \hat{f}_d(v_k,\tau_{k+1}) \left( (\delta v_k,\delta \tau_{k+1}) \oplus [(q_k,q_{k+1}) , \tilde{\xi}_k] \right) = 0,$$
  we have
  $$d\frakS_d(q_\cdot)(\delta q_\cdot) + \sum_{k=0}^{N-1} f_d(q_k,q_{k+1})(\delta q_k,\delta q_{k+1}) = 0.$$

  \underline{$1 \Lra 3$}. Given an infinitesimal variation
  $(\delta v_\cdot,\delta \tau_\cdot, \xi.)$ as in the statement, we can define
  $\delta q_k := h^{q_k}(\delta \tau_k) + (\xi_k)_Q(q_k)$,
  $1 \le k \le N$, together with the end point $\delta q_0 := 0$, so
  that
  $(\delta v_k,\delta \tau_{k+1}) = T_{(q_k,q_{k+1})} \Upsilon (\delta
  q_k,\delta q_{k+1})$. Note that the variations $\delta q_\cdot$ have
  vanishing end points. Indeed, $\delta q_0 = 0$ by construction and
  $$\delta q_N = h^{q_N} (\delta \tau_N) + (\xi_N)_Q(q_N) = h^{q_N} (0) + 0_Q(q_N) = 0.$$
	
  A computation analogous to that of $3 \Lra 1$ yields
  \begin{equation*}
    \begin{split}
      d\hat{\frakS}_d(\delta v_\cdot,\delta \tau_\cdot) &+ \sum_{k=0}^{N-1} \hat{f}_d(v_k,\tau_{k+1}) \left( (\delta v_k,\delta \tau_{k+1}) \oplus [(q_k,q_{k+1}) , \tilde{\xi}_k] \right) \\
      &= d\frakS_d(q_\cdot)(\delta q_\cdot) + \sum_{k=0}^{N-1}
      f_d(q_k,q_{k+1})(\delta q_k,\delta q_{k+1}) = 0.
    \end{split}
  \end{equation*}
	
  \underline{$3 \Longleftrightarrow 4$}. For infinitesimal variations
  $(\delta v_\cdot,\delta \tau_\cdot,\xi.)$, writing
  $\delta v_k = T_{(q_k,w_k)} \rho (\delta q_k , \delta w_k)$ and
  following the proof of Theorem 5.11 of \cite{F-T-Z}, we have
  \begin{equation}\label{dShat}
    \begin{split}
      d\hat{\frakS}_d(v_\cdot,\tau_\cdot)&(\delta v_\cdot,\delta \tau_\cdot) = \sum_{k=1}^{N-1} \left( D_1 \check{L}_d(q_k,w_k,\tau_{k+1}) \circ h^{q_k} + D_3 \check{L}_d(q_{k-1},w_{k-1},\tau_k) \right. \\
      & + D_2 \check{L}(q_k,w_k,\tau_{k+1}) D_1\calA_d(q_k,q_{k+1}) \circ h^{q_k} \\
      & + \left. D_2 \check{L}(q_{k-1},w_{k-1},\tau_k) D_2\calA_d(q_{k-1},q_k) \circ h^{q_k} \right) (\delta \tau_k) \\
      & + \sum_{k=0}^{N-1} \left( D_1\check{L}_d(q_k,w_k,\tau_{k+1}) \left( (\xi_k)_Q(q_k) \right) \right. \\
      & + \left. D_2\check{L}_d(q_k,w_k,\tau_{k+1}) T_{(q_k,q_{k+1})}
        \calA_d \left( (\xi_k)_Q(q_k) , (\xi_{k+1})_Q(q_{k+1}) \right)
      \right).
    \end{split}
  \end{equation}
	
  The same argument may be applied to the force $\check{f}_d$, yielding:
  \begin{equation*}
    \begin{split}
      \sum_{k=0}^{N-1} \check{f}_d(q_k,w_k,\tau_{k+1})&(\delta q_k,\delta w_k,\delta \tau_{k+1}) = \\
      &\sum_{k=1}^{N-1} \left( \check{f}_d^1(q_k,w_k,\tau_{k+1}) \circ h^{q_k} + \check{f}_d^3(q_{k-1},w_{k-1},\tau_k) \right. \\
      &+ \check{f}_d^2(q_k,w_k,\tau_{k+1}) D_1\calA_d(q_k,q_{k+1}) \circ h^{q_k} \\
      &+ \left. \check{f}_d^2(q_{k-1},w_{k-1},\tau_k) D_2\calA_d(q_{k-1},q_k) \circ h^{q_k} \right) (\delta \tau_k) \\
      &+ \sum_{k=0}^{N-1} \left( \check{f}_d^1(q_k,w_k,\tau_{k+1}) \left( (\xi_k)_Q(q_k) \right) \right. \\
      &+ \left. \check{f}_d^2(q_k,w_k,\tau_{k+1}) T_{(q_k,q_{k+1})}
        \calA_d \left( (\xi_k)_Q(q_k) , (\xi_{k+1})_Q(q_{k+1}) \right)
      \right).
    \end{split}
  \end{equation*}
	
  Putting both things together, since the variations $\delta \tau_\cdot$
  are independent of those generated by $\xi.$, we have that point $3$
  in the statement is equivalent to the vanishing of the summation
  involving $\delta \tau_\cdot$ and the one involving $\xi.$
  independently. The first of these conditions, since the
  $\delta \tau_\cdot$ are free, is equivalent to $\phi_{f_d}$ being
  identically zero.

  For the second summation, we can decompose the tangent map of
  $\calA_d$ and rearrange indexes to obtain
  \begin{equation*}
    \begin{split}
      \sum_{k=0}^{N-1} &\left( \check{f}_d^1(q_k,w_k,\tau_{k+1}) \left( (\xi_k)_Q(q_k) \right) \right. \\
      &+ \left. \check{f}_d^2(q_k,w_k,\tau_{k+1}) T_{(q_k,q_{k+1})} \calA_d \left( (\xi_k)_Q(q_k) , (\xi_{k+1})_Q(q_{k+1}) \right) \right) \\
      &= \sum_{k=0}^{N-1} \left( \check{f}_d^1(q_k,w_k,\tau_{k+1}) \left( (\xi_k)_Q(q_k) \right) \right. \\
      &+ \check{f}_d^2(q_k,w_k,\tau_{k+1}) D_1 \calA_d(q_k,q_{k+1}) \left( (\xi_k)_Q(q_k) \right) \\
      &+ \left. \check{f}_d^2(q_k,w_k,\tau_{k+1}) D_2 \calA_d (q_k,q_{k+1}) \left( (\xi_{k+1})_Q(q_{k+1}) \right) \right) \\
      &= \sum_{k=1}^{N-1} \left( \check{f}_d^1(q_k,w_k,\tau_{k+1}) \left( (\xi_k)_Q(q_k) \right) \right. \\
      &+ \check{f}_d^2(q_k,w_k,\tau_{k+1}) D_1 \calA_d \left( (\xi_k)_Q(q_k) \right) \\
      &+ \left. \check{f}_d^2(q_{k-1},w_{k-1},\tau_k) D_2 \calA_d
        (q_{k-1},q_k) \left( (\xi_k)_Q(q_k) \right) \right),
    \end{split}
  \end{equation*}
  where the first and last terms disappear as a consecuence of
  $\xi_0 = \xi_N = 0$. Finally, adding this to equation \eqref{dShat},
  we see that the vanishing of the summation involving $\xi.$ is
  equivalent to $\psi_{f_d}$ being zero for all $\xi_k$.
\end{proof}

\begin{remark}
  If $f_d = 0$, the statement of Theorem \ref{teor-red-forzada}
  reduces to the unconstrained version of Theorem 5.11 of
  \cite{F-T-Z}.
\end{remark}

\begin{example}\label{example-reduction}
  We return to the FDMS presented in Example
  \ref{example-discretization}, where $Q = \rr^2$. In what follows, we
  will consider $Q = \rr^2 \setminus \{ 0 \}$ so that $S^1$ acts on it
  by rotations without fixed points. Using polar coordinates, this
  configuration space is equivalent to $\rr^+ \times S^1$, where
  $\rr^+ := (0,+\infty)$.

  In order to study this system, we study its covering
  system\footnote{Given a forced mechanical system $M := (Q,L,f)$ and
    a covering map $\phi: \overline{Q} \lra Q$, it is possible to lift
    the system to a forced mechanical system
    $\overline{M} := (\overline{Q}, (T\phi)^*L, (T\phi)^*f)$. There is
    a correspondence between the trajectories of $M$ and those of
    $\overline{M}$ obtained using path lifting from $Q$ to
    $\overline{Q}$ and $\phi$ to go in the opposite direction.} with
  $Q = \rr^+ \times \rr$,
  \[
    \begin{split}
      L_d&(r_0,\eta_0,r_1,\eta_1) = \\
      &\frac{h}{2} \left(\frac{r_1-r_0}{h}\right)^2 + \frac{h}{2}
      \left(\frac{r_1+r_0}{2}\right)^2
      \left(\frac{\eta_1-\eta_0}{h}\right)^2 -
      \left(\frac{r_1+r_0}{2}\right)^2 \left(
        \left(\frac{r_1+r_0}{2}\right)^2-1 \right)^2,
    \end{split}
  \]
  and force $f_d = f_d^- + f_d^+$ given by
  \[
    \begin{split}
      f_d^-&(r_0,\eta_0,r_1,\eta_1) \\
      & = \underset{=: c_{r_0}}{\underbrace{\frac{k}{2} \left(
            -\frac{2}{h^2}(r_1-r_0)+\frac{r_1+r_0}{2}
            \left(\frac{\eta_1-\eta_0}{h}\right)^2 \right)}} dr_0
      \underset{=: c_{\eta_0}}{\underbrace{-
          \frac{k}{h^2}\left(\frac{r_1+r_0}{2}\right)^2
          (\eta_1-\eta_0)}} \ d\eta_0,
    \end{split}
  \]
  \[
    \begin{split}
      f_d^+&(r_0,\eta_0,r_1,\eta_1) \\
      & = \underset{=: c_{r_1}}{\underbrace{-\frac{k}{2} \left(
            \frac{2}{h^2}(r_1-r_0)+ \frac{r_1+r_0}{2}
            \left(\frac{\eta_1-\eta_0}{h}\right)^2 \right)}} dr_1
      \underset{=: c_{\eta_1}}{\underbrace{ - \frac{k}{h^2}
          \left(\frac{r_1+r_0}{2}\right)^2 (\eta_1-\eta_0)}} \
      d\eta_1.
    \end{split}
  \]
  That is,
  $f_d (r_0,\eta_0,r_1,\eta_1) =c_{r_0} dr_0+ c_{\eta_0} d\eta_0 +
  c_{r_1} dr_1 + c_{\eta_1} d\eta_1$.

  Whereas the original system had $S^1$ as a symmetry group, the
  covering system has $\rr$ as a symmetry group, acting by
  $l_g^Q(r,\eta) := (r,\eta + g)$. It is indeed a symmetry of the
  system, since $L_d$ is clearly invariant and we can check that $f_d$
  is $G$-equivariant: given
  $$((q_0,q_1),(\delta q_0,\delta q_1)) = (((r_0,\eta_0),(r_1,\eta_1)) , ((\delta r_0,\delta \eta_0),(\delta r_1,\delta \eta_1))) \in T(Q \times Q),$$
  we have
  \[
    \begin{split}
      l_g^{T^*(Q \times Q)}(f_d(q_0,q_1)) & (l_g^{Q \times Q}(q_0,q_1),(\delta q_0,\delta q_1)) \\
      &= f_d(q_0,q_1) \left( Tl_{-g}^{Q \times Q} \left( l_g^{Q \times Q}(q_0,q_1) , (\delta q_0,\delta q_1) \right) \right) \\
      &= f_d(q_0,q_1)((q_0,q_1) , (\delta q_0,\delta q_1)).
    \end{split}
  \]

  On the other hand,
  \[ f_d(l_g^{Q \times Q}(q_0,q_1)) (l_g^{Q \times Q}(q_0,q_1) ,
    (\delta q_0,\delta q_1)) = f_d(q_0,q_1)(\delta q_0,\delta q_1), \]
  where we have used that
  $\eta_1 + g - (\eta_0 + g) = \eta_1 - \eta_0$.
	
  $\ast$ The reduced spaces.
	
  Considering the discrete connection $\calA_d : Q \times Q \lra G$
  given by \\ $\calA_d((r_0,\eta_0),(r_1,\eta_1)) := \eta_1 - \eta_0$,
  we have
  \[ Q/G \simeq \rr^+, \quad \tilde{G} = (Q \times G)/G \simeq \rr^+
    \times \rr \] with $\pi : Q \lra Q/G$ and
  $\rho : Q \times G \lra \tilde{G}$ given by
  \[ \pi(r,\eta) = r \text{ and } \rho((r,\eta),g) = (r,g), \]
  \[ \tilde{G} \times Q/G \simeq \rr^+ \times \rr \times \rr^+ \] with
  $\Upsilon : Q \times Q \lra \tilde{G} \times Q/G$ given by
  \[ \Upsilon((r_0,\eta_0),(r_1,\eta_1)) = (r_0,\eta_1 - \eta_0,r_1). \]
	
  Since $\frakg := \Lie(G) \simeq \rr$, considering the principal
  connection $\frakA : TQ \lra \frakg$ given by
  $\frakA(\delta r,\delta \eta) := \delta \eta$, we have
  $$\tilde{\frakA}((\delta r_0,\delta \eta_0) , (\delta r_1,\delta \eta_1)) = \frac{1}{2}(\delta \eta_0 + \delta \eta_1)$$
  and, since the adjoint action is trivial,
  $$\tilde{\frakg} = (Q \times Q \times \frakg)/G \simeq \tilde{G} \times Q/G \times \frakg \simeq \rr^+ \times \rr \times \rr^+ \times \rr,$$
  with the identification
  $$[(r_0,\eta_0) , (r_1,\eta_1) , \xi] \mapsto (r_0 , \eta_1 - \eta_0 , r_1 , \xi).$$

  Therefore,
  $T^*(\tilde{G} \times Q/G) \oplus \hat{\frakg}^* \simeq T^*(\rr^+
  \times \rr \times \rr^+) \oplus \hat{\rr}^*$, where
  \[
    \hat{\rr} \simeq \rr^+ \times \rr \times \rr^+ \times \rr.
  \]
	
  $\ast$ $\hat{L}_d$ and $\hat{f}_d$.
	
  The reduced Lagrangian is
  \[
    \begin{split}
      \hat{L}_d&(v_0,\tau_1) = \hat{L}_d((r_0,g_0),r_1) \\
      &= \frac{h}{2} \left(\frac{r_1-r_0}{h}\right)^2 + \frac{h}{2}
      \left(\frac{r_1+r_0}{2}\right)^2 \left(\frac{g_0}{h}\right)^2 -
      \left(\frac{r_1+r_0}{2}\right)^2 \left(
        \left(\frac{r_1+r_0}{2}\right)^2-1 \right)^2
    \end{split}
  \]
  and, using Proposition \ref{f_d-hat},
  \[
    \begin{split}
      \hat{f}_d((r_0,g_0),r_1) &= \frac{k}{h^2} \left( (r_0 - r_1) + \frac{r_0 + r_1}{4} g_0^2 \right) \ dr_0 \\
      & \ \ \ + \frac{k}{h^2} \left( (r_0 - r_1) - \frac{r_0 + r_1}{4}
        g_0^2 \right) \ dr_1 - \frac{k}{h^2} \frac{(r_0 + r_1)^2}{2}
      g_0 \ \xi^*,
    \end{split}
  \]
  where $\xi^*$ is the basis of $\rr^*$.
\end{example}

\begin{remark}
  This system is the midpoint discretization of the Rayleigh system
  from Example 2 in \cite{dL-L-LG-22} and represents the movement of a
  unit mass particle in the plane with a radial potential (see Example
  2.3.2 in \cite{M-West}).
\end{remark}


\subsection{Reconstruction}

It is well known that once the dynamics of the reduced system has been
resolved, one is usually interested in reconstructing the original
dynamics.

Observe that the discrete curves
$(v_\cdot,\tau_\cdot) = ((v_0,\tau_1),\ldots,(v_{N-1},\tau_N))$ on
$\tilde{G} \times Q/G$ considered in Theorem \ref{teor-red-forzada}
satisfy $\tau_k = \pr^{Q/G}(v_k)$ for all $1 \le k \le N-1$.

\begin{theorem}\label{th:reconstruction}
  Let $\pi : Q \lra Q/G$ be a principal $G$-bundle and $\calA_d$ and
  $\frakA$ be an affine discrete connection and a principal connection
  on it, respectively. Let $(Q,L_d,f_d)$ be a FDMS with symmetry group
  $G$.

  Let $(v_\cdot,\tau_\cdot) = ((v_0,\tau_1),\ldots,(v_{N-1},\tau_N))$ be a
  discrete curve on $\tilde{G} \times (Q/G)$ such that
  $\tau_k = \pr^{Q/G}(v_k)$ for all $1 \le k \le N-1$.
  \begin{enumerate}
  \item \label{it:reconstruction-curve_lifting} Given $q_0 \in Q$ such
    that $\pi(q_0) = \pr^{Q/G}(v_0)$, there exists a unique discrete
    curve $q_\cdot = (q_0,\ldots,q_N)$ such that
    $\Upsilon(q_k,q_{k+1}) = (v_k,\tau_{k+1})$ for all
    $0 \le k \le N-1$.

  \item \label{it:reconstruction-trajectory_lifting} If $(v_\cdot,\tau_\cdot)$
    satisfies the conditions of item~\ref{it:teor-red-forzada-rem_q}
    of Theorem \ref{teor-red-forzada}, then the curve $q_\cdot$ of
    item~\ref{it:reconstruction-curve_lifting} is a trajectory of
    $(Q,L_d,f_d)$.
  \end{enumerate}
\end{theorem}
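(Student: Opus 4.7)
The plan is to prove the two items in order. For item (\ref{it:reconstruction-curve_lifting}) the construction will be an induction on $k$ based on the fact that $\tilde{\Phi}_{\calA_d}$ is a diffeomorphism (so $q_{k+1}$ is uniquely forced by $(q_k,w_k,\tau_{k+1})$). For item (\ref{it:reconstruction-trajectory_lifting}) I will match the lifted curve $q_\cdot$ with the reduced data and invoke the equivalence $(4)\Leftrightarrow(2)$ of Theorem \ref{teor-red-forzada}.

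For item (\ref{it:reconstruction-curve_lifting}), I argue by induction on $k$, maintaining the hypothesis $\pi(q_k) = \pr^{Q/G}(v_k)$. The base case $k=0$ is the assumption on $q_0$. For the inductive step, observe that the restricted map $\rho(q_k,\cdot) : G \lra \tilde{G}|_{\pi(q_k)}$ is a bijection onto the fiber of $\tilde{G}$ over $\pi(q_k) = \pr^{Q/G}(v_k)$, so there is a unique $w_k \in G$ with $\rho(q_k,w_k) = v_k$. Then I set $q_{k+1}$ to be the second component of $\tilde{\Psi}_{\calA_d}(q_k,w_k,\tau_{k+1})$, i.e.\ $q_{k+1} := \tilde{F}_1(q_k,w_k,\tau_{k+1})$. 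By construction $\tilde{\Phi}_{\calA_d}(q_k,q_{k+1}) = (q_k,w_k,\tau_{k+1})$, and applying $\rho \times 1_{Q/G}$ yields $\Upsilon(q_k,q_{k+1}) = (v_k,\tau_{k+1})$ as required. Uniqueness of $q_{k+1}$ comes from uniqueness of $w_k$ and from $\tilde{\Psi}_{\calA_d}$ being bijective. Finally, $\pi(q_{k+1}) = \tau_{k+1}$, and the hypothesis $\tau_{k+1} = \pr^{Q/G}(v_{k+1})$ (valid for $k+1 \le N-1$) closes the induction.

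For item (\ref{it:reconstruction-trajectory_lifting}), I observe that the curve $q_\cdot$ just constructed induces, via the recipe from the statement of Theorem \ref{teor-red-forzada}, the curves $\tau_k' := \pi(q_k)$, $w_k' := \calA_d(q_k,q_{k+1})$, and $v_k' := \rho(q_k,w_k')$. By the identities $\tilde{\Phi}_{\calA_d}(q_k,q_{k+1}) = (q_k,w_k,\tau_{k+1})$ established above, we have $w_k' = w_k$ and $v_k' = v_k$ for $0 \le k \le N-1$, and $\tau_k' = \tau_k$ for $1 \le k \le N$. Consequently, the reduced data attached to $q_\cdot$ coincides with the given $(v_\cdot,\tau_\cdot)$. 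By hypothesis $(v_\cdot,\tau_\cdot)$ satisfies item (\ref{it:teor-red-forzada-rem_q}) of Theorem \ref{teor-red-forzada}, so the equivalence $(\ref{it:teor-red-forzada-rem_q}) \Leftrightarrow (\ref{it:teor-red-forzada-em_q})$ proved in that theorem yields that $q_\cdot$ satisfies the forced discrete Euler--Lagrange equations \eqref{forcedELe}, hence is a trajectory of $(Q,L_d,f_d)$.

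The main obstacle is essentially indexing and consistency: one must verify that $\pi(q_k) = \pr^{Q/G}(v_k)$ is preserved throughout the induction (so that $w_k$ exists) and that the reduced data attached to $q_\cdot$ is genuinely the one supplied in the hypothesis. Everything else is immediate from the diffeomorphism property of $\tilde{\Phi}_{\calA_d}$ and from the equivalences already established in Theorem \ref{teor-red-forzada}.
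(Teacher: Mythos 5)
Your proof is correct, and its overall architecture is the same as the paper's: build the lift one step at a time using that each point $(v_k,\tau_{k+1})$ has a unique preimage under $\Upsilon$ with prescribed first component, and then obtain item~\ref{it:reconstruction-trajectory_lifting} by observing that the reduced data attached to the constructed $q_\cdot$ is exactly the given $(v_\cdot,\tau_\cdot)$, so the equivalence \ref{it:teor-red-forzada-rem_q}$\Leftrightarrow$\ref{it:teor-red-forzada-em_q} of Theorem~\ref{teor-red-forzada} applies; this second part is verbatim the paper's argument.

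The only difference is how the one-step lift is produced. The paper works abstractly with $\Upsilon$: it takes an arbitrary preimage $(\tilde q_0,\tilde q_1)$ of $(v_0,\tau_1)$ (surjectivity of $\Upsilon$), translates it by the unique $g\in G$ with $l^Q_g(\tilde q_0)=q_0$, and uses that $\Upsilon$ is a principal $G$-bundle (Remark~\ref{Upsilon-bundle}) to get existence and uniqueness of the lift with first component $q_0$. You instead make the lift explicit: you solve $\rho(q_k,w_k)=v_k$ uniquely in the fiber (which uses freeness of the $G$-action on $Q$, i.e.\ injectivity of $\rho(q_k,\cdot)$) and then set $q_{k+1}=\tilde F_1(q_k,w_k,\tau_{k+1})$ via the inverse $\tilde\Psi_{\calA_d}$ of $\tilde\Phi_{\calA_d}$, with uniqueness coming from the injectivity of $\tilde\Phi_{\calA_d}$. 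The two mechanisms rest on the same underlying fact (that $\Phi_{\calA_d}$ is a diffeomorphism and $\tilde\pi$ a principal bundle), but your version is more constructive — it exhibits $q_{k+1}$ as an explicit function of $(q_k,v_k,\tau_{k+1})$, which is convenient in computations — while the paper's version avoids invoking $\tilde F_1$ and keeps the argument at the level of the bundle $\Upsilon$. Your bookkeeping of the induction hypothesis $\pi(q_k)=\pr^{Q/G}(v_k)$ (needed so that $w_k$ exists, including at the last step $k=N-1$) is the same consistency check the paper performs when it notes $\pi(\tilde q_1)=\pr^{Q/G}(v_1)=\pi(q_1)$ before iterating. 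No gaps.
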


\begin{proof}
  \leavevmode \ref{it:reconstruction-curve_lifting}. For
  $(v_0,\tau_1) \in \tilde{G} \times Q/G$, using that
  $\Upsilon : Q \times Q \lra \tilde{G} \times Q/G$ is surjective, we
  can first take any $(\tilde{q}_0,\tilde{q}_1) \in Q \times Q$ such
  that $\Upsilon(\tilde{q}_0,\tilde{q}_1) = (v_0,\tau_1)$. Since
  $\pi : Q \lra Q/G$ is a principal $G$-bundle and
  $\pi(\tilde{q}_0) = \pr^{Q/G}(v_0) = \pi(q_0)$, there exists
  $g \in G$ such that $l_g^Q(\tilde{q}_0) = q_0$. Since $\Upsilon$ is
  also a principal $G$-bundle (see Remark \ref{Upsilon-bundle}), we
  can use $g$ to produce
  $(q_0,q_1) := l_g^{Q \times Q}(\tilde{q}_0,\tilde{q}_1)$. Notice
  that $(q_0,q_1)$ is the only lift of $(v_0,\tau_1)$ whose first
  component is $q_0$. Now, $q_1$ satisfies
  $\pi(\tilde{q}_1) = \pr^{Q/G}(v_1) = \pi(q_1)$, and so iterating
  this procedure defines a unique curve $q_\cdot$ on $Q$.

  \ref{it:reconstruction-trajectory_lifting}. It is enough to notice
  that the discrete curve $q_\cdot$ constructed applying the process
  described previously is a lift of $(v_\cdot,\tau_\cdot)$. Therefore, both
  curves are related as in the statement of Theorem
  \ref{teor-red-forzada}.
\end{proof}


\section{Applications}

In this section we first study the evolution of the discrete momentum
maps under conditions that are insufficient to apply the Discrete
Noether Theorem, proving that they evolve linearly along the
trajectories of the system.

The last part of this section is dedicated to the possibility of
having conserved structures on the reduced system. We show that if
there is a Poisson structure on $Q \times Q$ preserved by the flow of
a FDMS and if a symmetry group $G$ of the system acts by Poisson maps,
then there exists an induced Poisson structure on
$\tilde{G} \times Q/G$ preserved by the flow of the reduced system.


\subsection{Evolution of the momentum maps}

In the context of forced discrete mechanical systems, it is known that
in the presence of symmetries the Discrete Forced Noether Theorem
holds (see Theorem 3.2.1 of \cite{M-West} and Theorem 2 of
\cite{dL-L-LG-22}).

Following the exposition of \cite{dL-L-LG-22}, given a FDMS and a Lie
group $G$ acting on $Q$, we have the {\it forced discrete momentum
  maps} $J_{d,f}^\pm : Q \times Q \lra \frakg^*$ defined by
\begin{eqnarray*}
  \left\langle J_{d,f}^+ (q_0,q_1) , \xi \right\rangle & := & \left\langle D_2 L_d(q_0,q_1) + f_d^+(q_0,q_1) , \xi_Q(q_1) \right\rangle \\
  \left\langle J_{d,f}^- (q_0,q_1) , \xi \right\rangle & := & \left\langle -D_1 L_d(q_0,q_1) - f_d^-(q_0,q_1) , \xi_Q(q_0) \right\rangle.
\end{eqnarray*}

If, for some $\xi \in \frakg$
$\langle J_{d,f}^+ , \xi \rangle = \langle J_{d,f}^- , \xi \rangle$,
we can define a unique momentum map
$J_{d,f,\xi} : Q \times Q \lra \rr$ as
$$J_{d,f,\xi} (q_0,q_1) := \left\langle J_{d,f}^+(q_0,q_1) , \xi \right\rangle = \left\langle J_{d,f}^-(q_0,q_1) , \xi \right\rangle.$$

Using this notation, there exists the following version of Noether's
Theorem (see Theorem 3 of \cite{dL-L-LG-22}).

\begin{theorem}
  Let $(Q,L_d,f_d)$ be a FDMS. If $G$ is a Lie group acting on $Q$ in
  such a way that $L_d$ is $G$-invariant, then for each
  $\xi \in \frakg$,
  \begin{enumerate}
  \item $f_d(\xi_{Q \times Q}) = 0$ if and only if $J_{d,f,\xi}$ is
    well defined and is a constant of motion.
  \item When $f_d(\xi_{Q \times Q}) = 0$, $f_d$ is $\xi$-invariant
    (i.e., the Lie derivative $\calL_{\xi_{Q \times Q}} f_d = 0$) if
    and only if $i_{\xi_{Q \times Q}} d f_d = 0$.
  \end{enumerate}
\end{theorem}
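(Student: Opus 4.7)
The plan is to exploit the $G$-invariance of $L_d$ together with the forced discrete Euler--Lagrange equation to compare $J_{d,f}^+$ with $J_{d,f}^-$. First I would differentiate the identity $L_d(l_g^Q(q_0), l_g^Q(q_1)) = L_d(q_0,q_1)$ at $g=e$ in the direction $\xi$, obtaining the infinitesimal invariance
\begin{equation*}
D_1 L_d(q_0,q_1)(\xi_Q(q_0)) + D_2 L_d(q_0,q_1)(\xi_Q(q_1)) = 0.
\end{equation*}
Under the diagonal action, $\xi_{Q\times Q}(q_0,q_1) = (\xi_Q(q_0),\xi_Q(q_1))$, so $f_d(\xi_{Q\times Q})(q_0,q_1) = f_d^-(\xi_Q(q_0)) + f_d^+(\xi_Q(q_1))$. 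Combining this with the infinitesimal invariance and the definitions of $J_{d,f}^\pm$ yields the key identity
\begin{equation*}
\langle J_{d,f}^+(q_0,q_1) - J_{d,f}^-(q_0,q_1), \xi\rangle = f_d(\xi_{Q\times Q})(q_0,q_1),
\end{equation*}
which immediately establishes the equivalence between $f_d(\xi_{Q\times Q}) = 0$ and well-definedness of $J_{d,f,\xi}$.

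To finish statement 1, I would evaluate the forced discrete Euler--Lagrange equation~\eqref{forcedELe} on $\xi_Q(q_k)$ and rearrange to
\begin{equation*}
\langle J_{d,f}^+(q_{k-1},q_k), \xi\rangle = \langle J_{d,f}^-(q_k,q_{k+1}), \xi\rangle.
\end{equation*}
When $f_d(\xi_{Q\times Q})=0$, both sides agree with the corresponding values of $J_{d,f,\xi}$, proving conservation along trajectories. The reverse implication is automatic, since well-definedness of $J_{d,f,\xi}$ already forces $f_d(\xi_{Q\times Q})=0$ via the key identity.

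For statement 2, I would invoke Cartan's magic formula
\begin{equation*}
\calL_{\xi_{Q\times Q}} f_d = d(i_{\xi_{Q\times Q}} f_d) + i_{\xi_{Q\times Q}} d f_d.
\end{equation*}
The hypothesis $f_d(\xi_{Q\times Q}) = 0$ is exactly $i_{\xi_{Q\times Q}} f_d = 0$, so its exterior derivative vanishes identically and the formula collapses to $\calL_{\xi_{Q\times Q}} f_d = i_{\xi_{Q\times Q}} d f_d$, giving the stated equivalence.

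I do not anticipate a genuine obstacle here: the entire argument is a direct chase through the definitions, powered by infinitesimal invariance, the forced discrete Euler--Lagrange equations, and Cartan's formula. The only point requiring a bit of care is the sign bookkeeping in the key identity, which reflects the opposite signs appearing in the definitions of $J_{d,f}^+$ and $J_{d,f}^-$.
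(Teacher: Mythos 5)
Your argument is correct: the identity $\langle J_{d,f}^+ - J_{d,f}^-,\xi\rangle = f_d(\xi_{Q\times Q})$ obtained from infinitesimal $G$-invariance of $L_d$, the evaluation of the forced discrete Euler--Lagrange equations \eqref{forcedELe} on $\xi_Q(q_k)$ to get $\langle J_{d,f}^+(q_{k-1},q_k),\xi\rangle = \langle J_{d,f}^-(q_k,q_{k+1}),\xi\rangle$, and Cartan's formula for item 2 together give exactly the stated equivalences. Note that the paper does not prove this theorem at all: it is quoted from Theorem 3 of \cite{dL-L-LG-22}, so there is no internal proof to compare against; your route is the standard one and is consistent with the computations the paper does carry out later, where the same key identity appears as \eqref{Jd-Jd} and Cartan's formula is used in the same way in the proof of Proposition \ref{Jd-evol}. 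The only point worth making explicit is that well-definedness of $J_{d,f,\xi}$ is an equality of functions on all of $Q\times Q$ (not just along trajectories), which is why it alone forces $f_d(\xi_{Q\times Q})=0$ in the converse direction, as you use.
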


Since it is not a hypothesis that $G$ be a symmetry group of the
system (in the sense of Definition \ref{sym-group-def}), it is natural
to ask ourselves if there is some kind of relation between those
conditions and the forces being $G$-equivariant.

\begin{lemma}
  Let $G$ be a Lie group acting on a smooth manifold $M$. If
  $\alpha : M \lra T^*M$ is a $G$-equivariant $1$-form, then $\alpha$
  is $\xi$-invariant for all $\xi \in \frakg$, i.e.,
  $\calL_{\xi_M} \alpha = 0$.
\end{lemma}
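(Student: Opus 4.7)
The plan is to reduce the statement to the classical fact that the Lie derivative of an invariant tensor field along the infinitesimal generator of its symmetry action vanishes. The key translation, already foreshadowed in the footnote of Definition \ref{sym-group-def}, is that the $G$-equivariance of $\alpha: M \lra T^*M$ as a section is equivalent to the $G$-invariance of $\alpha$ viewed as a $1$-form on $M$, i.e.\ $(l_g^M)^* \alpha = \alpha$ for every $g \in G$.

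First, I would make that translation explicit. Unpacking the definitions, $\alpha$ being $G$-equivariant with respect to $l^M$ on $M$ and the cotangent lift $l^{T^*M}$ on $T^*M$ means $\alpha(l_g^M(x)) = l_g^{T^*M}(\alpha(x))$; evaluating on a tangent vector $v \in T_x M$ and using that the cotangent lift is built from $(Tl_{g^{-1}}^M)^*$, this is exactly $((l_g^M)^* \alpha)_x(v) = \alpha_x(v)$, so $(l_g^M)^* \alpha = \alpha$.

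Next, I would recall the definition of the infinitesimal generator \eqref{inf-generator}: the flow of $\xi_M$ is precisely $\phi_t = l_{\exp(t\xi)}^M$. By the defining property of the Lie derivative,
\[
\mathcal{L}_{\xi_M} \alpha \;=\; \frac{d}{dt}\bigg|_{t=0} \phi_t^* \alpha \;=\; \frac{d}{dt}\bigg|_{t=0} (l_{\exp(t\xi)}^M)^* \alpha.
\]
Applying the $G$-invariance established in the first step with $g = \exp(t\xi)$, the integrand is constantly equal to $\alpha$, and differentiating at $t=0$ yields $\mathcal{L}_{\xi_M} \alpha = 0$.

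There is no serious obstacle here; the only subtlety is keeping the conventions for the cotangent lift straight so that ``$G$-equivariance as a section'' really does correspond to ``$G$-invariance as a form'' without an inverse or sign discrepancy. Once that is verified, the conclusion is immediate from the standard identification of the flow of $\xi_M$ with $t \mapsto l_{\exp(t\xi)}^M$.
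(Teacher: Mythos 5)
Your proposal is correct and follows essentially the same route as the paper's proof: both translate the $G$-equivariance of $\alpha$ as a section into invariance of the $1$-form under pullback by $l_{\exp(t\xi)}^M$, identify this map with the flow of $\xi_M$ via \eqref{inf-generator}, and conclude $\calL_{\xi_M}\alpha = 0$ by differentiating a constant family. The only cosmetic difference is that you state the equivalence ``equivariant section $\Leftrightarrow$ invariant form'' as a separate step, while the paper folds it into the pointwise pullback computation.
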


\begin{proof}
  Notice that being $G$-equivariant means that for all $g \in G$,
  $$l_g^{T^*M} \circ \alpha \circ l_{g^{-1}}^M = \alpha.$$
	
  For any $\xi \in \frakg$, the flow of the vector field $\xi_M$ is
  given by $F_t^{\xi_M}(x) = l_{\exp(t\xi)}^M (x)$ (recalling
  \eqref{inf-generator}).
	
  Then, given $v_x \in T_xM$,
  \begin{equation*}
    \begin{split}
      (F_t^{\xi_M})^*( \alpha (F_t^{\xi_M} (x)) )(v_x) &= \alpha(l_{\exp(t\xi)}^M (x)) \left( T_x l_{\exp(t\xi)}^M (v_x) \right) \\
      &= \left( l_{\exp(t\xi)^{-1}}^{T^*M} \alpha \left(
          l_{\exp(t\xi)}^M (x) \right) \right) (v_x) = \alpha(x)(v_x),
    \end{split}
  \end{equation*}
  where in the last equality we have used the $G$-equivariance of
  $\alpha$. Therefore,
  $$(F_t^{\xi_M})^*( \alpha (F_t^{\xi_M} (x)) ) - \alpha(x) = 0$$
  and
  $$\calL_{\xi_M} \alpha = \lim_{t \to 0} \frac{(F_t^{\xi_M})^*( \alpha (F_t^{\xi_M} (x)) ) - \alpha(x)}{t} = \lim_{t \to 0} \frac{0}{t} = 0.$$
\end{proof}

Applying the last lemma to the force $f_d$, we have the following result.

\begin{proposition}
  Let $(Q,L_d,f_d)$ be a FDMS with symmetry group $G$. Then, $f_d$ is
  $\xi$-invariant for all $\xi \in \frakg$.
\end{proposition}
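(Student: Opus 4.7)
The plan is to observe that this proposition is an immediate corollary of the preceding lemma, once one identifies the right manifold and action. Specifically, I would take $M := Q \times Q$ with the diagonal action of $G$, and take $\alpha := f_d$ viewed as a section $Q\times Q \lra T^*(Q\times Q)$.

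The only thing to check is that the hypothesis of the lemma is satisfied, i.e., that $f_d$ is $G$-equivariant as a map $Q\times Q \lra T^*(Q\times Q)$, with $T^*(Q\times Q)$ carrying the cotangent lift of the diagonal action. But this is precisely part of the definition of symmetry group (Definition~\ref{sym-group-def}): for $(Q,L_d,f_d)$ to have $G$ as a symmetry group, we require exactly that $f_d$ be $G$-equivariant with respect to $l^{Q\times Q}$ and $l^{T^*(Q\times Q)}$. So the hypothesis of the lemma holds automatically.

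Applying the lemma then gives $\calL_{\xi_{Q \times Q}} f_d = 0$ for every $\xi \in \frakg$, which is exactly the $\xi$-invariance of $f_d$. There is really no obstacle here; the entire content of the proposition is the bookkeeping identification $\xi_{Q \times Q}$ for the infinitesimal generator of the diagonal action on $Q \times Q$, which is the relevant infinitesimal generator to use when applying the lemma. So the proof will be one or two sentences invoking the lemma with the appropriate substitution.
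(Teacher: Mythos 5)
Your proposal matches the paper's argument exactly: the paper proves the proposition by applying the preceding lemma to $\alpha = f_d$ on $M = Q \times Q$ with the diagonal $G$-action, where the required $G$-equivariance is precisely the condition in Definition~\ref{sym-group-def}. Nothing is missing.
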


When there is no unique momentum map, we might wonder how the two maps
we do have evolve. To state the result we will use the following
expected definition: the two maps
$J_{d,f,\xi}^\pm : Q \times Q \lra \rr$ are defined by
\begin{eqnarray*}
  J_{d,f,\xi}^+ (q_0,q_1) & := & \left\langle J_{d,f}^+ (q_0,q_1) , \xi \right\rangle \\
  J_{d,f,\xi}^- (q_0,q_1) & := & \left\langle J_{d,f}^- (q_0,q_1) , \xi \right\rangle.
\end{eqnarray*}

\begin{proposition}\label{Jd-evol}
  Let $(Q,L_d,f_d)$ be a FDMS with symmetry group $G$ and $Q$
  connected. If $df_d$ is horizontal, then the quantities
  $J_{d,f,\xi}^\pm$ evolve according to
\begin{eqnarray*}
  J_{d,f,\xi}^+(q_1,q_2) & = & J_{d,f,\xi}^+(q_0,q_1) + \mu \\
  J_{d,f,\xi}^-(q_1,q_2) & = & J_{d,f,\xi}^-(q_0,q_1) + \mu,
\end{eqnarray*}
for some constant $\mu \in \rr$, where $(q_1,q_2)$ is constructed from
$(q_0,q_1)$ by the flow of the system.
\end{proposition}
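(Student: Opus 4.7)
The plan is to reduce the one-step change of $J_{d,f,\xi}^{\pm}$ along a trajectory to a single scalar quantity on $Q\times Q$, namely $i_{\xi_{Q\times Q}}f_d$, and then show that this scalar is constant under the given hypotheses. First I would pair the forced discrete Euler--Lagrange equations \eqref{forcedELe} at indices $k-1,k,k+1$ with $\xi_Q(q_k)$; the left-hand side produces $J_{d,f,\xi}^+(q_{k-1},q_k)$ and the right-hand side produces $J_{d,f,\xi}^-(q_k,q_{k+1})$, giving the one-step identity
$$J_{d,f,\xi}^+(q_{k-1},q_k)=J_{d,f,\xi}^-(q_k,q_{k+1})$$
along any trajectory. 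From this, the $+$ increment from $(q_0,q_1)$ to $(q_1,q_2)$ equals $J_{d,f,\xi}^+(q_1,q_2)-J_{d,f,\xi}^-(q_1,q_2)$, and symmetrically the $-$ increment equals $J_{d,f,\xi}^+(q_0,q_1)-J_{d,f,\xi}^-(q_0,q_1)$; both reduce to pointwise values of the difference $J^+-J^-$.

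Next I would compute this pointwise difference directly from the definitions. Grouping terms, $J_{d,f,\xi}^+(q_0,q_1)-J_{d,f,\xi}^-(q_0,q_1)$ equals $(dL_d+f_d)(q_0,q_1)\bigl(\xi_Q(q_0),\xi_Q(q_1)\bigr)$. Since the diagonal infinitesimal generator satisfies $\xi_{Q\times Q}(q_0,q_1)=(\xi_Q(q_0),\xi_Q(q_1))$ and $L_d$ is $G$-invariant (so $dL_d(\xi_{Q\times Q})=0$), this simplifies to
$$J_{d,f,\xi}^+-J_{d,f,\xi}^- = i_{\xi_{Q\times Q}}f_d.$$
It therefore suffices to prove that the function $i_{\xi_{Q\times Q}}f_d$ is constant on $Q\times Q$. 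The preceding proposition gives $\calL_{\xi_{Q\times Q}}f_d=0$, so Cartan's magic formula yields
$$0=i_{\xi_{Q\times Q}}df_d+d(i_{\xi_{Q\times Q}}f_d).$$
The horizontality of $df_d$ with respect to the principal $G$-bundle $Q\times Q\lra(Q\times Q)/G$ forces $i_{\xi_{Q\times Q}}df_d=0$, since $\xi_{Q\times Q}$ is vertical. Hence $d(i_{\xi_{Q\times Q}}f_d)=0$, and connectedness of $Q$ (and therefore of $Q\times Q$) gives $i_{\xi_{Q\times Q}}f_d\equiv \mu$ for some $\mu\in\rr$. Substituting this constant value into the two increments from the first paragraph yields the stated relations.

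The main conceptual point is the meaning of ``$df_d$ horizontal'': horizontality is with respect to the principal $G$-bundle $Q\times Q\to (Q\times Q)/G$, whose vertical vectors are spanned by the infinitesimal generators $\eta_{Q\times Q}$; this is precisely what makes the contraction with $\xi_{Q\times Q}$ vanish and unlocks the Cartan argument. Once this is recognized, every remaining step — the Euler--Lagrange pairing, the cancellation of $dL_d$ on vertical vectors by $G$-invariance, and Cartan's formula combined with connectedness — is a short calculation, so no serious technical obstacle is expected.
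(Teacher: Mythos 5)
Your proof is correct, and it shares the paper's analytic core: you both use the preceding proposition ($\calL_{\xi_{Q\times Q}}f_d=0$), Cartan's formula together with horizontality of $df_d$ (so $i_{\xi_{Q\times Q}}df_d=0$ because $\xi_{Q\times Q}$ is vertical), connectedness to make $i_{\xi_{Q\times Q}}f_d$ a constant, and $G$-invariance of $L_d$ to identify $J_{d,f,\xi}^+-J_{d,f,\xi}^-$ with that constant pointwise. Where you differ is in how the trajectory enters: you pair the forced discrete Euler--Lagrange equation at $q_k$ directly with $\xi_Q(q_k)$ to get the matching identity $J_{d,f,\xi}^+(q_{k-1},q_k)=J_{d,f,\xi}^-(q_k,q_{k+1})$, whereas the paper evaluates the discrete Lagrange--d'Alembert sum along the variation $\delta q_k=\xi_Q(q_k)$ with $N=2$ and asserts that this sum vanishes. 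Your route is actually the cleaner one: the variations $\xi_Q(q_k)$ do not have fixed endpoints, so the variational principle does not literally give ``$0=\sum$'' there; by $G$-invariance of $L_d$ that sum equals $N\,f_d(\xi_{Q\times Q})$, and carrying this through yields your identity $J^+(\text{earlier})=J^-(\text{later})$ rather than the paper's $J^+(\text{later})=J^-(\text{earlier})$, with the net effect of flipping the sign of the constant relative to the paper's convention $f_d(\xi_{Q\times Q})=-\mu$. Since the proposition only asserts the existence of \emph{some} constant $\mu$, both arguments establish the statement, but yours pins it down transparently as $\mu=f_d(\xi_{Q\times Q})$ and avoids the endpoint subtlety entirely.
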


\begin{proof}
  By the previous proposition, given $\xi \in \frakg$,
  $$0 = \calL_{\xi_{Q \times Q}} f_d = i_{\xi_{Q \times Q}} df_d + d i_{\xi_{Q \times Q}} f_d = d \left( f_d(\xi_{Q \times Q}) \right).$$
	
  Then, as $Q \times Q$ is connected, $f_d(\xi_{Q \times Q}) = -\mu$ for
  some constant $\mu \in \rr$, so that
  \begin{equation}\label{Jd-Jd}
    -\mu = \langle dL_d + f_d , \xi_{Q \times Q} \rangle = J_{d,f,\xi}^+ - J_{d,f,\xi}^-,
  \end{equation}
  where we are using that $dL_d(\xi_{Q \times Q}) = 0$ because $L_d$
  is $G$-invariant.
	
  On the other hand, evaluating the variational principle on a
  trajectory $q_\cdot$ and infinitesimal variations
  $\delta q_k := \xi_Q(q_k)$, $0 \le k \le N$, we have
  $$0 = \sum_{k=0}^{N-1} (dL_d + f_d)(q_k,q_{k+1})(\xi_{Q \times Q}(q_k,q_{k+1})) = \left\langle J_{d,f}^+(q_{N-1},q_N) - J_{d,f}^-(q_0,q_1) , \xi \right\rangle.$$
	
  Taking $N=2$ and switching the notation,
  $$0 = J_{d,f,\xi}^+(q_1,q_2) - J_{d,f,\xi}^-(q_0,q_1).$$
	
  Combining this equation with (\ref{Jd-Jd}),
  $$J_{d,f,\xi}^+(q_1,q_2) = J_{d,f,\xi}^-(q_0,q_1) \stackrel{(\ref{Jd-Jd})}{=} J_{d,f,\xi}^+(q_0,q_1) + \mu.$$
	
  For the other momentum map,
  $$J_{d,f,\xi}^-(q_1,q_2) \stackrel{(\ref{Jd-Jd})}{=} J_{d,f,\xi}^+(q_1,q_2) + \mu = J_{d,f,\xi}^-(q_0,q_1) + \mu.$$
\end{proof}

\begin{example}
  We analyze the discrete version of an example proposed in \cite{M15}
  (Section 2.2).

  Let us consider a disk of mass $m$ and radius $r$ placed on a
  horizontal surface, whose movement is affected by a certain friction
  force. The configuration of this system is $Q = S^1$, but just as we
  did in Example \ref{example-reduction} we work with its covering
  system with configuration space $Q = \rr$,
  \[ L_d(\varphi_0,\varphi_1) := \frac{mr^2}{4}
    \frac{(\varphi_1-\varphi_0)^2}{h} \] and force
  $f_d = -\dfrac{\eta mgr}{2h} (d\varphi_0+d\varphi_1)$, where $\eta$
  is a constant friction coefficient.

  Notice that the trajectories of the system are given by
  \[ \varphi_{k+1} = 2\varphi_k - \varphi_{k-1} - \frac{4\eta
      g}{r}. \]

  Whereas the group $S^1$ is a symmetry of the original system,
  $G := \rr$ is a symmetry of the covering system with
  $l_g^Q(\varphi) := \varphi + g$. For $\xi \in \frakg \simeq \rr$,
  the momentum maps are given by
  \[ \begin{split}
      J_{d,f,\xi}^+(\varphi_0,\varphi_1) 
      & = \left( \frac{mr^2}{2h}(\varphi_1-\varphi_0) - \frac{mr\eta g}{2h}  \right) \xi \\
      J_{d,f,\xi}^-(\varphi_0,\varphi_1) &= \left(
        \frac{mr^2}{2h}(\varphi_1-\varphi_0) + \frac{mr\eta g}{2h}
      \right) \xi.
    \end{split} \]

  Now, considering the first step of a trajectory of the FDMS, we
  have
  $$(\varphi_0,\varphi_1,\varphi_2)=\left(\varphi_0,\varphi_1,2\varphi_1
    - \varphi_0 - \dfrac{4\eta g}{r}\right).$$

  Then,
  \[ \begin{split}
      J_{d,f,\xi}^+(\varphi_0,\varphi_1) & = \left( \frac{mr^2}{2h}(\varphi_1-\varphi_0) + \frac{mr\eta g}{2h}  \right) \xi \\
      J_{d,f,\xi}^+(\varphi_1,\varphi_2) & = \left( \frac{mr^2}{2h}(\varphi_2-\varphi_1) + \frac{mr\eta g}{2h}  \right) \xi \\
      & =J_{d,f,\xi}^+(\varphi_0,\varphi_1) - \frac{2mr\eta g}{h} \xi
    \end{split} \]
  and, similarly,
  \[
    J_{d,f,\xi}^-(\varphi_1,\varphi_2) =
    J_{d,f,\xi}^-(\varphi_0,\varphi_1) - \frac{2mr\eta g}{h} \xi.
  \]

  Therefore, although Noether's Theorem does not apply, we see that
  the momentum maps evolve in a controlled fashion, with the constant
  $\mu$ of Proposition \ref{Jd-evol} being
  $\mu = -\frac{2mr\eta g}{h} \xi$. Notice that the proposition can
  indeed be applied, since $df_d = 0$, implying that it is horizontal.
\end{example}


\subsection{Conservation of Poisson structures}

We follow the ideas found in Section 8 of \cite{F-T-Z-16}. It is a
well known and used fact that if $(Q,L_d)$ is a regular discrete
mechanical system (without forces), there is a symplectic structure
$\omega_{L_d}$ defined on (an open subset containing the diagonal of)
$Q \times Q$. Furthermore, the flow of the system preserves
$\omega_{L_d}$. In our setting of forced discrete mechanical systems,
the existence of such a symplectic structure is no longer guaranteed,
although, there are systems that posses a symplectic structure
preserved by the flow\footnote{This is the case, for example, if the
  force $f_d$ satisfies $df_d = \pr_2^* \beta - \pr_1^*\beta$ for some
  $2$-form $\beta$ on $Q$. This scenario arises naturally in the
  context of the discrete Routh reduction.}.

If we have a symmetric FDMS $(Q,L_d,f_d)$, it is a natural question
whether a preserved symplectic structure induces a $2$-form on
$\tilde{G} \times Q/G$ preserved by the flow of the reduced system. As
the space $\tilde{G} \times Q/G$ may not have even dimension, it may
be impossible for it to be a symplectic manifold to begin
with. Therefore, it seems convenient to focus on a more general
scenario and consider Poisson structures.

Recall that a smooth map
$F : (P_1,\{ \cdot , \cdot \}_1) \lra (P_2,\{ \cdot , \cdot \}_2)$
between Poisson manifolds is a {\it Poisson map} if for every pair of
smooth maps $g,h : P_2 \lra \rr$,
$$F^* \{ g,h \}_2 = \{ F^* g , F^*h \}_1.$$

\begin{proposition}
  Let $\pi : Q \lra Q/G$ be a principal $G$-bundle and $\calA_d$ and
  $\frakA$ be an affine discrete connection and a principal connection
  on it, respectively. Let $(Q,L_d,f_d)$ be a FDMS with symmetry group
  $G$ acting by Poisson maps. If $\{ \cdot , \cdot \}$ is a Poisson
  structure on $Q \times Q$ preserved by the flow $\bF$ of the system,
  then there exists a Poisson structure $\{ \cdot , \cdot \}_r$ on
  $\tilde{G} \times Q/G$ preserved by the flow $\hat{\bF}$ of the
  reduced system such that $\Upsilon$ (defined in \eqref{red-spaces})
  is a Poisson map.
\end{proposition}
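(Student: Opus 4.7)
The plan is to construct $\{\cdot,\cdot\}_r$ by composing two Poisson-category operations: first descending the bracket from $Q \times Q$ to $(Q \times Q)/G$, and then transporting it via the diffeomorphism $\Phi_{\calA_d}$ from diagram \eqref{red-spaces}. The preservation of $\{\cdot,\cdot\}_r$ under $\hat{\bF}$ then follows by showing that the original flow $\bF$ descends to a Poisson map on $(Q\times Q)/G$ and conjugating by $\Phi_{\calA_d}$.

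First, since $\pi : Q \lra Q/G$ is a principal $G$-bundle, the diagonal action of $G$ on $Q \times Q$ is free and proper, and by hypothesis it acts by Poisson maps. The standard quotient theorem for Poisson manifolds under free, proper actions by Poisson diffeomorphisms yields a unique Poisson structure $\{\cdot,\cdot\}_{/G}$ on $(Q \times Q)/G$ characterised by
$$\{f,g\}_{/G} \circ \tilde{\pi} = \{f \circ \tilde{\pi}, g \circ \tilde{\pi}\}$$
for every $f,g \in C^\infty((Q\times Q)/G)$; equivalently, $\tilde{\pi}$ is a Poisson submersion. Since $\Phi_{\calA_d} : (Q\times Q)/G \lra \tilde{G} \times Q/G$ is a diffeomorphism, I would then define $\{\cdot,\cdot\}_r$ on $\tilde{G} \times Q/G$ to be the push-forward of $\{\cdot,\cdot\}_{/G}$, so that $\Phi_{\calA_d}$ becomes a Poisson isomorphism. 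Because $\Upsilon = \Phi_{\calA_d} \circ \tilde{\pi}$ is a composition of Poisson maps, it is automatically Poisson, giving one half of the statement.

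For the preservation of $\{\cdot,\cdot\}_r$ under $\hat{\bF}$, I would first note that the $G$-invariance of $L_d$ and the $G$-equivariance of $f_d$ make the forced discrete Euler--Lagrange equations \eqref{forcedELe} $G$-equivariant, so the flow $\bF$ commutes with the diagonal action and descends to a well-defined map $\bF_{/G} : (Q\times Q)/G \lra (Q\times Q)/G$ satisfying $\bF_{/G} \circ \tilde{\pi} = \tilde{\pi} \circ \bF$. Combining the hypothesis that $\bF$ preserves $\{\cdot,\cdot\}$ with the defining identity for $\{\cdot,\cdot\}_{/G}$ yields, for all $f,g \in C^\infty((Q\times Q)/G)$,
$$\{f \circ \bF_{/G}, g \circ \bF_{/G}\}_{/G} \circ \tilde{\pi} = \{f \circ \tilde{\pi} \circ \bF, g \circ \tilde{\pi} \circ \bF\} = \{f \circ \tilde{\pi}, g \circ \tilde{\pi}\} \circ \bF = \{f,g\}_{/G} \circ \bF_{/G} \circ \tilde{\pi},$$
and surjectivity of $\tilde{\pi}$ lets me drop it, so $\bF_{/G}$ is Poisson. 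The reduced flow is then $\hat{\bF} = \Phi_{\calA_d} \circ \bF_{/G} \circ \Phi_{\calA_d}^{-1}$, a Poisson-category conjugate of $\bF_{/G}$, and hence preserves $\{\cdot,\cdot\}_r$.

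The main obstacle is the quotient theorem invoked in the first step: one must verify that the bracket $\{f \circ \tilde{\pi}, g \circ \tilde{\pi}\}$ of two $G$-invariant functions is again $G$-invariant (this is precisely where the hypothesis that $G$ acts by Poisson maps is used) and that the bracket so defined on the smooth manifold $(Q\times Q)/G$ is indeed a Poisson bracket. Everything else is a diagram chase relying on the fact that compositions and inverses of Poisson maps are Poisson. A minor technical point, analogous to the unforced case, is that $\bF$ need only be defined on an open neighbourhood of the diagonal in $Q \times Q$; the construction is then performed on the corresponding open subset of $\tilde{G} \times Q/G$.
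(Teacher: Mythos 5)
Your proposal is correct and follows essentially the same route as the paper: both rest on the Poisson quotient theorem for free, proper actions by Poisson maps (the paper applies it directly to the principal bundle $\Upsilon = \Phi_{\calA_d}\circ\tilde{\pi}$, citing Theorem 10.5.1 of \cite{M-R}, while you factor through $\tilde{\pi}$ and push forward by the diffeomorphism $\Phi_{\calA_d}$, which yields the same bracket by uniqueness), and both deduce that $\hat{\bF}$ is Poisson from the commutativity $\Upsilon\circ\bF = \hat{\bF}\circ\Upsilon$ supplied by Theorem \ref{teor-red-forzada} --- you simply inline the proof of the descent lemma (Lemma 8.3 of \cite{F-T-Z-16}) that the paper cites.
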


\begin{proof}
  As noted in Remark \ref{Upsilon-bundle},
  $\Upsilon : Q \times Q \lra \tilde{G} \times Q/G$ is a principal
  $G$-bundle, so that the action of $G$ on $Q \times Q$ is a free and
  proper surjective submersion. As $G$ acts on $Q \times Q$ by Poisson
  maps, it follows from Theorem 10.5.1 in \cite{M-R} that there is a
  unique Poisson structure $\{ \cdot , \cdot \}_r$ on
  $\tilde{G} \times Q/G$ such that $\Upsilon$ becomes a Poisson
  map. We want to show that this structure is preserved by the flow
  $\hat{\bF}$ of the reduced system. Theorem \ref{teor-red-forzada}
  yields the following commutative diagram:
  $$\xymatrixcolsep{5pc}\xymatrixrowsep{4pc}\xymatrix{
    Q \times Q \ar[r]^-{\bF} \ar[d]_-\Upsilon & Q \times Q \ar[d]^-\Upsilon \\
    \tilde{G} \times Q/G \ar[r]_-{\hat{\bF}} & \tilde{G} \times Q/G \\
  }$$ Since $\Upsilon \circ \bF$ and $\Upsilon$ are Poisson maps,
  $\hat{\bF}$ is also a Poisson map, by Lemma 8.3 in \cite{F-T-Z-16}.
\end{proof}


\section*{Acknowledgments}
This document is the result of research partially supported by grants
from the Universidad Nacional de Cuyo [codes 06/C009-T1 and 06/C567],
Universidad Nacional de La Plata, [codes X758, X915] and CONICET.


\providecommand{\bysame}{\leavevmode\hbox to3em{\hrulefill}\thinspace}
\providecommand{\MR}{\relax\ifhmode\unskip\space\fi MR }
\providecommand{\MRhref}[2]{%
	\href{http://www.ams.org/mathscinet-getitem?mr=#1}{#2}
}
\providecommand{\href}[2]{#2}

\end{document}